\documentclass[a4paper,12pt]{article}
\usepackage{amsmath,amsthm,amsfonts}
\usepackage{fullpage,comment,graphicx,epstopdf}
\usepackage{multicol}

\usepackage{color,graphicx,times}

\input{xy}
\xyoption{all}
\xyoption{matrix}

\theoremstyle{plain}

\newtheorem{teo}{Theorem}
  \newtheorem{lem}[teo]{Lemma}
  \theoremstyle{remark}
  \newtheorem{rem}[teo]{Remark}
 
\theoremstyle{definition}
  
  \theoremstyle{definition}
  \newtheorem{defn}[teo]{Definition}
    \newtheorem{defi}[teo]{Definition}

  \newtheorem{ex}[teo]{Example}
  \theoremstyle{plain}
  \newtheorem{prop}[teo]{Proposition}
  \theoremstyle{plain}
  
  \newtheorem{coro}[teo]{Corollary}

 \def\be{\beta} 
\def\t{\triangleleft}

\def\wt{\widetilde}

\def\Aut{\mathrm{Aut}}

\def\id{\mathrm{Id}}

\def\Z{\mathbb{Z}}

\def\m1{^{ \hbox{\small{-}}1}}

\def\Uncfg{U_{nc}^{fg}}

\title{Virtual link and knot invariants from non-abelian Yang-Baxter 
2-cocycle pairs}

\author{Marco A. Farinati\thanks{Member of CONICET. Partially supported by
PIP 11220110100800CO, and UBACYT 20021030100481BA, mfarinat@dm.uba.ar.} 
\ and Juliana Garc\'ia Galofre\thanks{Partially supported by 
PIP 11220110100800CO and UBACYT 20021030100481BA,
jgarciag@dm.uba.ar }
}
\begin{document}
\maketitle
\begin{abstract}
For a given $(X,S,\beta)$, where $S,\beta\colon X\times X\to X\times X$ 
are set theoretical solutions of Yang-Baxter equation 
with a compatibility condition, we define an invariant  for virtual (or classical)
 knots/links using  non commutative 2-cocycles pairs  $(f,g)$ that generalizes the 
one defined in \cite{FG2}. We also define,
a group $\Uncfg=\Uncfg(X,S,\beta)$
 and functions $\pi_f, \pi_g\colon  X\times X\to \Uncfg(X)$ governing all 2-cocycles in $X$.
We exhibit examples of computations achieved using \cite{GAP2015}.
\end{abstract}

\section*{Introduction and preliminaries}

In \cite{FG2} we constructed an invariant for knots and links using 
noncommutative 2-cocycles, that is, for $(X,\sigma)$ a special solution of the Yang-Baxter
 equation (see definitions of biquandle below) and a map
$f:X\times X\to G$, where $G$ is a (eventually) non-abelian group, and $f$ satisfies
certain equations that we call noncommutative 2-cocycle conditions. In this way a noncommutative
version of the state-sum invariant can be defined. In this work we generalize 
this construction for virtual knots and links. Since a (diagram of a)
virtual link has two types of crossings, for a given set
$X$ of possible labels for the semi arcs, we  need two rules for coloring semi arcs in a crossing, say
$(X,S)$ and $(X,\beta)$, and also we 
need two types of weights. We consider
pairs $f,g:X\times X\to G$ that we call  ``noncommutative 2-cocycle pairs". The strategy is
to ask invariance under generalized (i.e. classical, virtual or mixed) Reidemeister moves
both for colorings and for products of weights in a given order.  
Next we also consider a universal group $\Uncfg$ that is 
the universal target of noncomutative 2-cocycle pairs for a given $(X,S,\beta)$. 
As a consequence of this construction, the invariant that {\em a priori} depends on the set of colorings and a choice of a non-commutative 2-cocycle pair
is actually determined by intrinsic properties of the set of colorings.
 
The contents of this work are as follows: after recalling the combinatorial definition
of a virtual link or knot, we introduce, in {\em Section 1},
 the notion of  non-abelian 2-cocycle pair.
Using this notion  we propose the noncommutative invariant 
in Definition \ref{defmain}, proving that it is actually an invariant. In {\em Section 2}
we define a group together with a noncommutative 2-cocycle pair
that has the universal property as target of noncommutative 2-cocycles. This group 
is defined in terms of generators and relations, and it is actually computable for 
virtual pairs
of small cardinality.  
We end by computing invariants of some virtual knots and links using this universal
group.

\begin{defi}\label{YBdefi}
A set theoretical solution of the Yang-Baxter equation is a pair
$(X,\sigma)$ where $\sigma:X\times X\to X\times X$ is a bijection satisfying
\[
( \id\times \sigma)( \sigma\times \id)( \id\times \sigma)
=( \sigma\times \id)( \id\times \sigma)( \sigma\times \id)
\]

Notation:  $\sigma(x,y)=(\sigma^1(x,y),\sigma^2(x,y))$ and
$\sigma^{-1}(x,y)=\overline{\sigma}(x,y)$.

A solution $(X,\sigma)$ is called
non degenerated, or {\em birack} if in addition: 
\begin{enumerate}
 \item
 ({\em left invertibility})
 for any $x,z\in X$ there exists a unique $y$ such that $\sigma^1\!(x,y)=z$, 
 \item ({\em right invertibility}) for any $y,t\in X$ there exists a unique $x$ such that $\sigma^2(x,y)=t$.
\end{enumerate}
A birack is called {\em biquandle} if, given $x_0\in X$, there exists a unique $y_0\in X$ such that
$\sigma(x_0,y_0)=(x_0,y_0)$. In other words, if there exists a bijective map $s:X\to X$ such
that
\[
\{(x,y):\sigma(x,y)=(x,y)\}=
\{(x,s(x)): x\in X\}
\]
See Lemma 0.3 in \cite{FG2} for biquandle equivalent conditions.
\end{defi}

Following Kauffman (see  \cite{K}), a virtual link or knot can be defined using 
diagrams with two
 types of crossings: classical and virtual ones; a  virtual crossings will be 
 a  4-valent vertex with a small circle around it. 
Virtual links/knots may be considered  to be equivalence classes of planar 
virtual knot diagrams under the
equivalence relation generated by  the
three (classical) Reidemeister moves,  the virtual moves and a mixed Reidemeister move.
\[
\begin{array}{ccc}
\includegraphics[scale=.2]{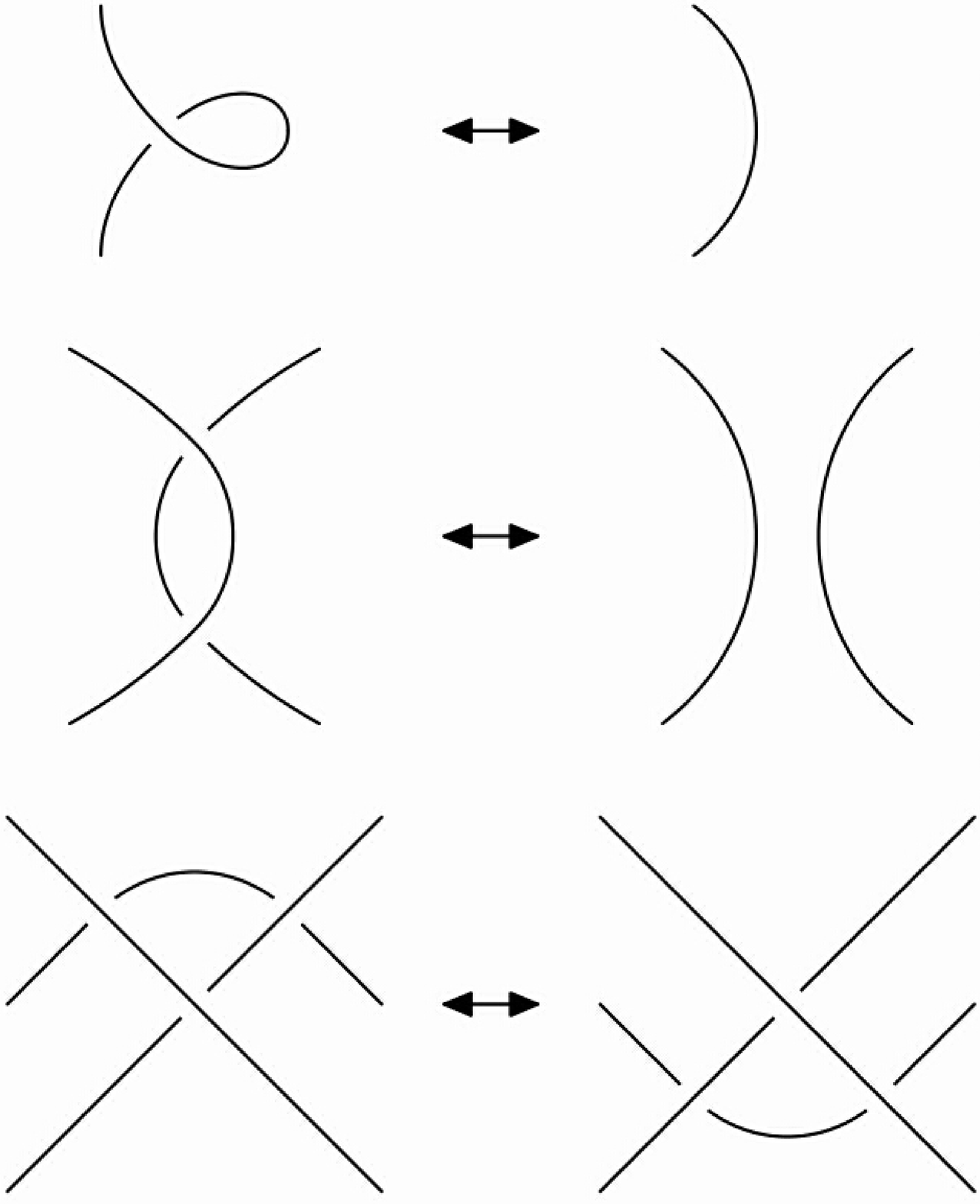}&&
\includegraphics[scale=.2]{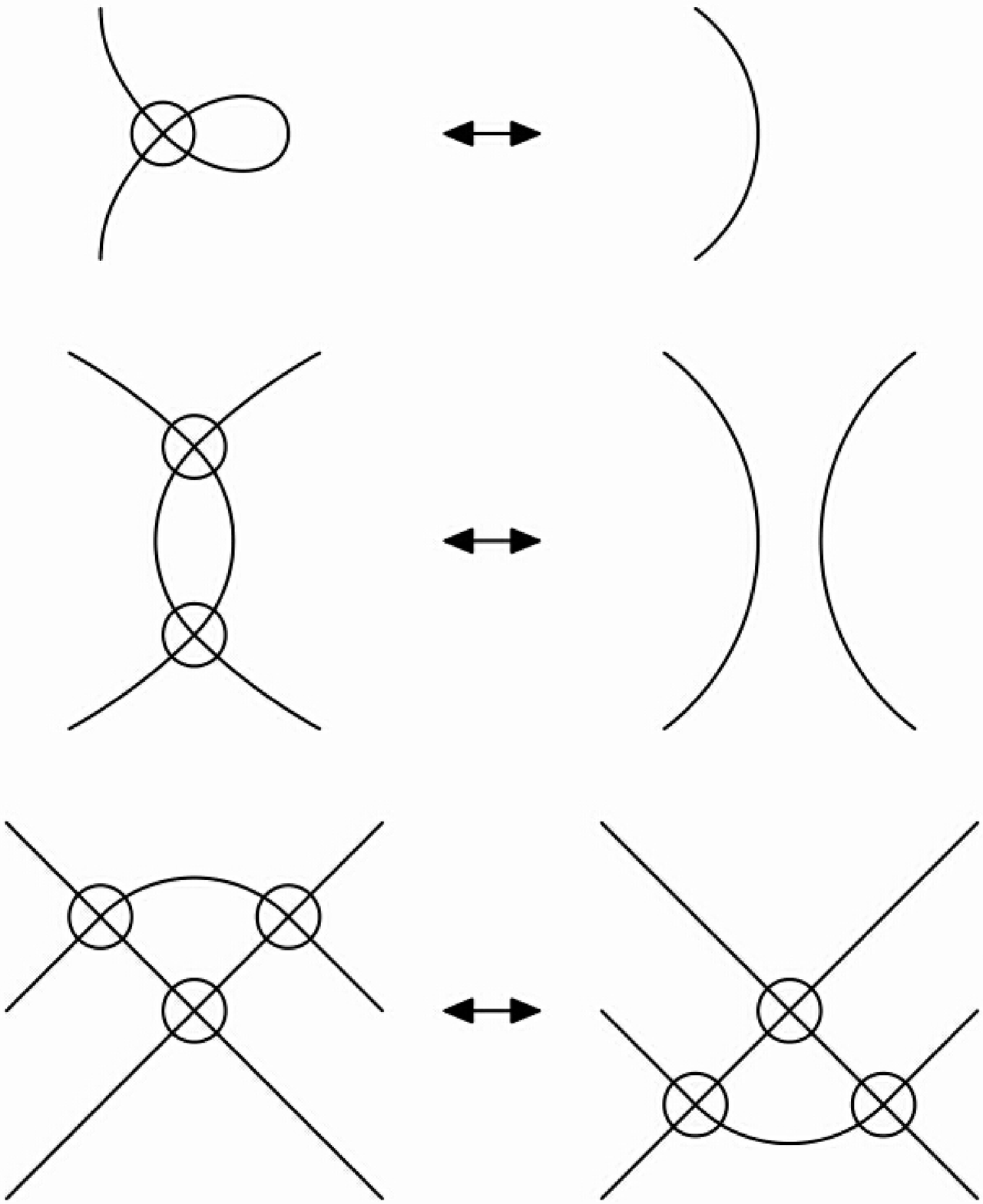}\\
\hbox{Classical Reidemeister moves:}&&\hbox{Virtual Reidemeister moves:}\\
\hbox{RI,RII and RIII.}&&\hbox{vRI, vRII, vRIII}
\end{array}
\] 

\[
\begin{array}{c}
\includegraphics[scale=.2]{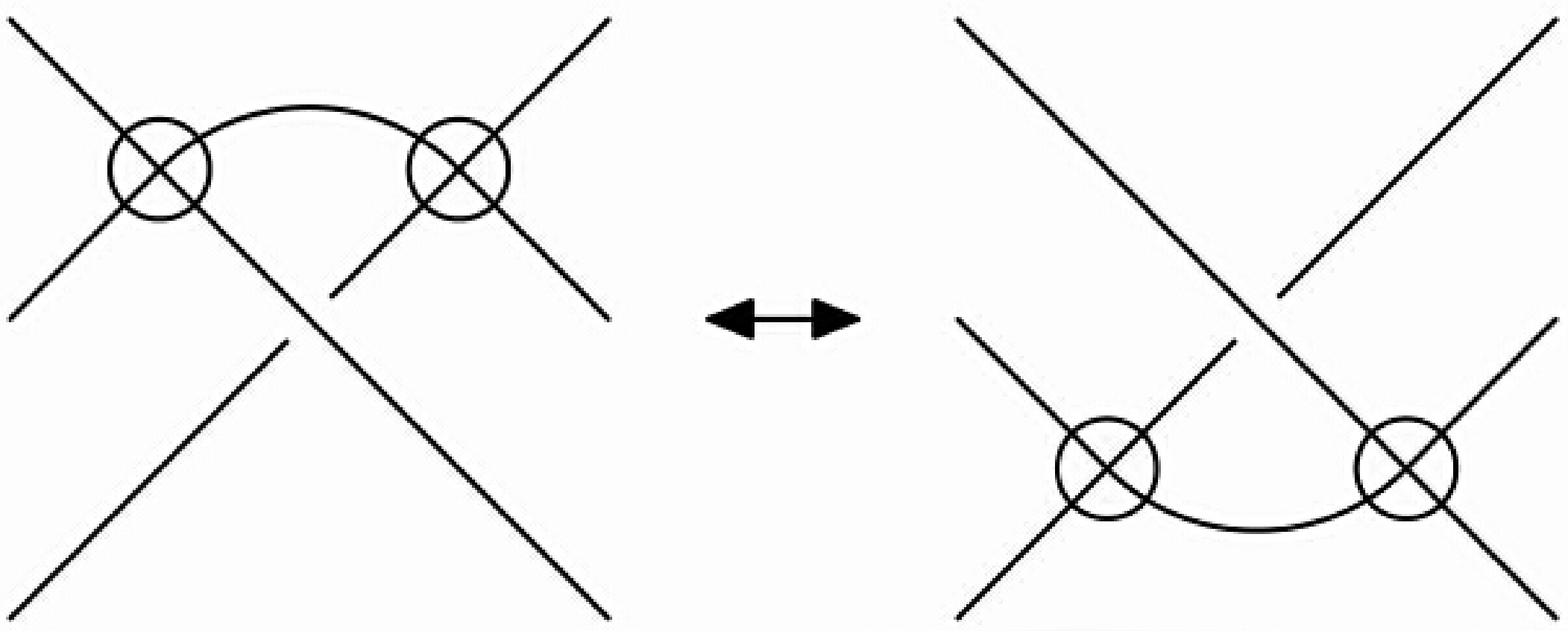}\\
\hbox{Mixed Reidemeister move: mixed RIII}
\end{array}
\]  
All links and knots considered
in this work will be oriented ones.
A useful reduction is proved in \cite{CN}:
 
\begin{lem}\label{eqmoves} (Lemma 2.4, \cite{CN}) The classical and virtual II moves, together with one oriented mixed RIII or vRIII move, imply the other oriented 
mixed RIII and vRIII moves. That is, we can reverse the direction of any strand in type mixed RIII or vRIII move using a sequence of RII and vRII
moves. 
 \end{lem}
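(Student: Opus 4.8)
The plan is to argue combinatorially with diagrams, in the same spirit as the classical analysis of oriented Reidemeister moves. I would fix one oriented mixed RIII move (resp.\ one oriented vRIII move) as given, label the three strands meeting in the triangular region by $1,2,3$, and observe that the $2^3=8$ a priori distinct oriented versions of the move are indexed by $(\Z/2)^3$, where flipping the $i$-th coordinate reverses the orientation of strand $i$. Since the relation ``$M'$ follows from $M$ together with RII and vRII moves'' is transitive, and since the hypercube $(\Z/2)^3$ is connected through single-coordinate flips, it suffices to prove the following local statement: \emph{for any one oriented version of the move, the version obtained by reversing a single strand follows from it together with RII and vRII moves}. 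One may further cut the casework roughly in half by noting that reversing all three strands at once is simply the same move seen after a rotation of the plane by $\pi$, hence is always available for free.

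To reverse the orientation of one strand, I would use the standard detour (or ``zig-zag'') trick. Say we wish to reverse strand $3$. Away from the triangular region, a planar isotopy of the diagram brings strand $3$ back running in the opposite direction, but this replaces the local picture by a \emph{different} oriented version of the move. To reconcile this with the version we already have, just before the strand enters the triangle we push it across one of the other two strands by a single RII move --- a classical RII if the crossing so created is classical, a virtual RII if it is virtual --- chosen so that the two arcs of strand $3$ now enter the triangle with exactly the orientations prescribed by the known move. We then apply the known move inside the triangle, and finally remove the two auxiliary crossings by the inverse RII (resp.\ vRII). The composite is the desired oriented move with strand $3$ reversed, realized by a sequence of RII and vRII moves and a single instance of the known move. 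The same scheme handles reversing strand $1$ or strand $2$, and works verbatim in both the mixed RIII and the vRIII case; the only difference between the two cases is which of the three crossings are classical and which are virtual, hence which flavour of RII one uses for the auxiliary bigon.

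The main obstacle --- and essentially the only content beyond the idea above --- is the case-by-case bookkeeping: for each of the (up to the symmetry above, few) oriented versions and each choice of strand to be reversed, one must exhibit the auxiliary bigon so that (i) after inserting it the local diagram genuinely matches an already-available oriented version of the move, and (ii) the type of the auxiliary crossings (classical versus virtual) is compatible with undoing them by RII and vRII alone, so that no further RIII-type move is ever invoked. For a vRIII move every crossing in sight is virtual, so (ii) is automatic and only the orientation bookkeeping (i) remains; for a mixed RIII move one has to keep careful track of which strand carries the classical crossing and route the detour around the appropriate neighbouring strand. Verifying the finitely many resulting pictures completes the proof; these are precisely the diagrams drawn in \cite{CN}.
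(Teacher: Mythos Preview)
The paper does not give its own proof of this lemma: it is quoted verbatim from \cite{CN} and used as a black box, so there is nothing to compare against here beyond the original reference.

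Your sketch is essentially the standard argument and matches what \cite{CN} does. One phrasing issue: the sentence ``a planar isotopy of the diagram brings strand $3$ back running in the opposite direction'' is misleading, since a planar isotopy cannot change the direction in which an oriented strand traverses the local disk. What actually produces the reversal is exactly the RII/vRII bigon you insert in the next sentence; the ``isotopy'' step is superfluous and should be dropped. With that adjustment your outline is correct, and the point you flag as the main obstacle---checking, for the mixed RIII case, that the auxiliary bigon can always be placed so that only RII or vRII (never a further type III move) is needed to create and remove it---is indeed the content of the verification in \cite{CN}.
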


\begin{defi}
A pair of biquandles $(X,S)$, $(X,\beta)$, (shortly $(X, S,\beta)$) is called a  {\em virtual pair} 
if $\beta^2=1$ and  
$(1\times \be)(S\times 1)(1\times \be)=(\be\times 1)(1\times S)(\be\times 1)$.
This notion is also called  virtual invariant in  \cite{BF}.
\end{defi}

\begin{ex}
If $(X,S)$ is a biquandle and $a\in\Aut(X,S)$, that is, $a:X\to X$ is a bijection satisfying
$(a\times a)S(a^{-1}\times a^{-1})=S$, then one can consider
$\beta(x,y)=(a^{-1}y,ax)$. It is easy to check that we get a virtual pair in 
that way.
\end{ex}

Not every virtual pair arise as in the above construction, if $S$ is  involutive (i.e. $S^2
=\id$) with $S(x,y)\neq (a^{-1}y,a x)$, then $(X,S,S)$ is a virtual pair.
But there are also different examples with non-involutive $S$,  already with  $|X|=3$. The
 following is an example with
 cardinal 4:

\begin{ex}\label{exZ4}
$
X=\Z/4\Z,\
S(x,y)=(-y,x+2y)
$
\[
\beta(x,y)=
\left\{
\begin{array}{cc}
(y,x)&\hbox{ if $x$ or $y$ is odd}\\
(y+2,x+2)&\hbox{ if $x$ and $y$ are even}
\end{array}
 \right.
\]

\end{ex}

\section{Non-abelian 2-cocycle pair}

We begin this section by introducing the notion of noncommutative 2-cocycle pair. 
If one analyzes the properties that a general weight (see subsection \ref{weights}) must satisfy in order to generalize
the construction  given in \cite{FG2}
to the virtual case, then one ends with the following definition:
 \begin{defi} 
 \label{nc2} Let  $H$ be  a (not necessarily abelian) group and $(X, S,\beta)$  a virtual pair. 
 A pair of  functions $f,g:X\times X\rightarrow H$
is a {\em  noncommutative 2-cocycle pair } if:
\begin{itemize}
\item the pair $f,S$ satisfies:
\begin{itemize}
\item[f1)] 
$ f\big(x,y\big)f\big(S^2(x,y),z\big)
 =f\big(x,S^1\!(y,z)\big)f\big(S^2(x,S^1\!(y,z)),S^2(y,z)\big)
$, 
\item[f2)]
$f\big(S^1\! (x, y), S^1\!(S^2(x,y),z)\big)=f\big(y,z\big)$,
\item[f3)]
$f(x, s(x))= 1$ (recall the map $s\colon X\to X$ from Definition \ref{YBdefi}), 
\end{itemize}
\item the pair $g,\be$ satisfies:
\begin{itemize}
\item[g1)]  $g(x, s_{\be}(x))=1$  (notice that $\beta$ involutive implies that 
$(X,\beta)$ is a biquandle, hence,  there is an associated map $s_\beta:X\to X$), 
\item[g2)] $g(x,y)g\big(\be(x,y)\big)=1$,
\item[g3)] 
$ g\big(x,y\big)g\big(\be^2(x,y),z\big)
 =g\big(x,\be^1\!(y,z)\big)g\big(\be^2(x,\be^1\!(y,z)),\be^2(y,z)\big)
$,
\item[g4)] 
$ g\big(y,z\big)g\big(\be^2(x,\be^1(y,z)),\be^2(y,z)\big)
 =g\big(x,y\big)g\big(\be^1(x,y),\be^1(\be^2(x,y),z)\big)
$,
\item[g5)] 
$ g\big(y,z\big)g\big(x,\be^1(y,z)\big)
 =g\big(\be^2\!(x,y),z\big)g\big(\be^1(x,y),\be^1(\be^2(x,y),z)\big)
$,

\end{itemize} 
\item and compatibility conditions between $f,g,\be,S$:
\begin{itemize}
 \item[m1)] $g\big(y,z\big)=g\big(S^1(x,y),\be^1(S^2(x,y),z)\big)$,
 \item[m2)] $g(y,z)g\big(x,\be^1(y,z)\big)=g\big(S^2(x,y),z\big)g\big(S^1(x,y),\be^1(S^2(x,y),z\big)$,
 \item[m3)] $g\big(x,\be^1(y,z)\big)f\big(\be^2(x,\be^1(y,z)),\be^2(y,z)\big)=
f(x,y)g\big(S^2(x,y),z\big)$

 \end{itemize}
 \end{itemize}
are satisfied for any $x, y,z \in X$.
\end{defi}

\begin{ex} Let $(X,S,\beta)$ be as in Example \ref{exZ4}, let
$H$ be the group with generators $\{a,b,c,d,h\}$ and relations
\[
bc=cb,\
c^2=1,\
[ h,a]=[h,b]=[h,c]=[h,d]=1,\] define $f,g:X\times X\to H$ by the tables
\[
\begin{array}{c||r|r|r|r}
f&0&1&2&3\\
\hline
\hline
0&1&a&1&a\\
\hline
1&b&c&bc&1\\
\hline
2&1&d&1&d\\
\hline
3&b&1&bc&c\\
\end{array}
\hskip 2cm 
\begin{array}{c || l|l|l|l}
g&0&1&2&3\\
\hline
\hline
0&1&h&1&h\\
\hline
1&h^{-1}&1&h^{-1}&1\\
\hline
2&1&h&1&h\\
\hline
3&h^{-1}&1&h^{-1}&1\\
\end{array}
\]
One can check by hand that the pair $(f,g)$ is a 2-cocycle pair,
 and after Theorem \ref{teouncfg}
we will see that any other 2-cocycle pair $(\wt f,\wt g):X\times X\to \wt H$ 
necessarily factorizes
 through this pair and a group homomorphism $\rho:H\to  \wt {H}$.
\end{ex}

\begin{ex}\label{exflip}
If $X=\{1,2\}$ and $S$=$\beta$=flip, then the cocycle  conditions f3, g1 and g2 are
\[
f(1,1)=f(2,2)=g(1,1)=g(2,2)=1\]
\[
g(1,2)=:h,\ g(2,1)=h^{-1}
\]
Call $a=f(1,2)$,  conditions f1, f2, g3-g5 and m1 m2 are trivially satisfied and condition m3 is simply
\[
ah=ha,\ bh=hb.\]
So, if one takes $H$ the group freely generated by $\{a,b,h\}$ with relations
$ah=ha$ and $bh=hb$, then the pair $f,g$ defined by $g(1,2)=h=g(2,1)^{-1}$,
$f(1,2)=a$, $f(2,1)=b$ and $f(1,1)=f(2,2)=g(1,1)=g(2,2)=1$ is a 2-cocycle pair.
\end{ex}

Next we consider some special cases and analyze the general equations for each case.
\subsubsection*{Some special cases}

If $(X,S)$ is a biquandle, $a\in\Aut(X,S)$ and $\beta(x,y)=(a^{-1}y,ax)$, then 
equations f1,f2,f3 remain the same. An easy computation shows that
g3 together with the choice of $y=ax$ gives $g(x,z)=g(ax,az)$ for all $x,z$.
Using this condition, the other equations may be simplified giving the following (equivalent)
set:
\begin{itemize}
\item[g0)]  $g(x,z)=g(ax,az)$ 
\item[g1)]  $g(x, ax)=1$, 
\item[g2)] $g(x,y)g\big(a^{-1}y,ax\big)=1$,
\item[g3)] 
$ g\big(x,y\big)g\big(x,z\big) =g\big(x,z\big)g\big(x,y\big)$,
\item[g4)] 
$ g\big(y,z\big)g\big(x,y\big) =g\big(x,y\big)g\big(y,z\big)$,
\item[g5)] $ g\big(y,z\big)g\big(x,z\big) =g\big(x,z\big)g\big(y,z\big)$,
\end{itemize}

\begin{itemize}
 \item[m1)] $g\big(y,z\big)=g\big(S^1(x,y),a^{-1}z\big)$,
 \item[m2)] $g(y,z)g\big(x,a^{-1}z\big)=g\big(S^2(x,y),z\big)g\big(y,z\big)$,
 \item[m3)] $g\big(x,a^{-1}z\big)f\big(ax,ay\big)=
f(x,y)g\big(S^2(x,y),z\big).$

 \end{itemize}

Notice that g3, g4, g5 are automatic if the group is abelian.
Actually, when the group is abelian, the equations m1, m2 and m3 can be replaced by
\begin{itemize}
 \item[m1)'] $g\big(y,z\big)=g\big(S^1(x,y),a^{-1}z\big)$,
 \item[m2)'] $g\big(x,z\big)=g\big(S^2(x,y),az\big)$,
 \item[m3)'] $f\big(ax,ay\big)=f(x,y).$

 \end{itemize}

Another interesting situation is when the biquandle is given by a quandle, that is 
$S(x,y)=(y, x\t y)$. In this case f2 is automatic, and one can make explicit $S^1$ and $S^2$ giving the following:
\begin{prop}
If $X=(Q,\t)$ is a quandle and  $a\in\Aut(Q)$, then $(f,g)$ is  a non-abelian
2-cocycle pair for $\beta(x,y)=(a^{-1}y,ax)$ and $S(x,y)=(y,x\t y)$
if and only if they verify the following equations
\begin{itemize}
\item[f1)] 
$ f\big(x,y\big)f\big(x\t y,z\big)
 =f\big(x,z\big)f\big(x\t z,y\t z\big)
$, 
\item[f3)]
$f(x, x)= 1$, 
\end{itemize}

\begin{itemize}
\item[g0-m1-m2)]  $g(x,z)=g(ax,z)=g(x,az)$, 
\item[g1)]  $g(x, x)=1$, 
\item[g2)] $g(x,y)g\big(y,x\big)=1$,
\item[g3)] $ g\big(x,y\big)g\big(x,z\big)
 =g\big(x,z\big)g\big(x,y\big)
$,
\item[g4)] 
$ g\big(y,z\big)g\big(x,y\big)
 =g\big(x,y\big)g\big(y,z\big)
$,
\item[g5)] 
$ g\big(y,z\big)g\big(x,z\big)
 =g\big(x,z\big)g\big(y,z \big)
$,
\end{itemize} 
\begin{itemize}
 \item[m3)] $g\big(x,z\big)f\big(ax,ay\big)=
f(x,y)g\big(x\t y,z\big)$.
 \end{itemize}

\end{prop}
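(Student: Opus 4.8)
The plan is to obtain the proposition as a direct specialization of Definition \ref{nc2}: substitute the explicit components of $S$, $\beta$, $s$ and $s_{\beta}$ into the eleven conditions f1--f3, g1--g5, m1--m3, simplify, and --- since the claim is an ``if and only if'' --- keep each simplification reversible so that the reduced list implies the original one as well.

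First I would record the data. From $S(x,y)=(y,x\triangleleft y)$ one reads $S^1(x,y)=y$, $S^2(x,y)=x\triangleleft y$, and the quandle idempotency $x\triangleleft x=x$ gives $S(x,x)=(x,x)$, so the map $s$ of Definition \ref{YBdefi} is $s=\id$. From $\beta(x,y)=(a^{-1}y,ax)$ one checks $\beta^2=\id$, hence $(X,\beta)$ is a biquandle, and $\beta(x,ax)=(x,ax)$ gives $s_{\beta}(x)=ax$; the virtual-pair compatibility between $S$ and $\beta$ is automatic because $a\in\Aut(Q)$, as in the Example preceding Definition \ref{nc2}. On the $f$-side the substitution is immediate: f1 becomes $f(x,y)f(x\triangleleft y,z)=f(x,z)f(x\triangleleft z,y\triangleleft z)$ verbatim; f3 becomes $f(x,x)=1$ since $s=\id$; and f2 becomes $f(y,z)=f\big(S^1(x,y),S^1(S^2(x,y),z)\big)=f(y,z)$ because $S^1(S^2(x,y),z)=z$, a tautology, so it drops out.

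The bulk of the argument is the $g$-side. Substituting $\beta^1(x,y)=a^{-1}y$, $\beta^2(x,y)=ax$ into g1--g5, m1--m3 gives the raw forms --- for instance g3 becomes $g(x,y)g(ax,z)=g(x,a^{-1}z)g(ax,ay)$, m1 becomes $g(y,z)=g(y,a^{-1}z)$, and m3 becomes $g(x,a^{-1}z)f(ax,ay)=f(x,y)g(x\triangleleft y,z)$. The pivotal step, already isolated in the discussion just before the statement, is that putting $y=ax$ in g3 and using g1 twice yields $g(ax,az)=g(x,z)$; together with m1 this upgrades to the two-slot invariance $g(x,z)=g(ax,z)=g(x,az)$, i.e. the line ``g0-m1-m2''. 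With that invariance available, g1 collapses to $g(x,x)=1$, g2 to $g(x,y)g(y,x)=1$, g3--g5 to the three displayed commutation relations, m1--m2 reduce (using g5) to content already carried by the displayed list, and m3 to $g(x,z)f(ax,ay)=f(x,y)g(x\triangleleft y,z)$. The converse direction is obtained by reversing each step: $g(x,ax)=1$ from $g(x,x)=1$ plus second-slot invariance, the raw g5 from the displayed g5 plus invariance, and so on.

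The step I expect to be the main obstacle is the $g$-side bookkeeping, and it has two delicate points. First, extracting the two-slot $a$-invariance of $g$ from g1, g3 and m1: everything else collapses only after this, and it is exactly here that the ``if'' direction is subtle, since the raw g3--g5 and m1--m2 are strictly stronger than the displayed commutation relations until the invariance is in hand. Second, pinning down precisely what m2 contributes once m1 and g5 have been used, so that no relation is dropped or silently counted twice. By contrast the identifications $s=\id$, $s_{\beta}(x)=ax$, $S^i$, $\beta^i$ and the whole $f$-side are routine.
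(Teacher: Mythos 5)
Your overall route is the same one the paper gestures at (specialize Definition \ref{nc2}, get $g(x,z)=g(ax,az)$ from g3 with $y=ax$ and g1, upgrade via m1 to two-slot $a$-invariance, collapse the rest), and most of your reductions are correct and reversible: $s=\mathrm{id}$, $s_\beta(x)=ax$, the tautological f2, and the equivalences for f1, f3, g1, g2, g3--g5, m1 and m3. The genuine gap is exactly the point you flag but then wave through: the claim that ``m1--m2 reduce (using g5) to content already carried by the displayed list''. Substituting $S^1(x,y)=y$, $S^2(x,y)=x\triangleleft y$, $\beta^1(y,z)=a^{-1}z$ into m2 and using m1 gives $g(y,z)\,g(x,a^{-1}z)=g(x\triangleleft y,z)\,g(y,z)$; after the two-slot $a$-invariance and g5 this is equivalent to $g(x\triangleleft y,z)=g(x,z)$, i.e.\ invariance of $g$ in its first slot under the quandle action. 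That condition is not in the displayed list and does not follow from it: m3 only yields the twisted relation $g(x\triangleleft y,z)=f(x,y)^{-1}g(x,z)f(ax,ay)$. Concretely, take the three-element quandle with $\phi_1=\phi_2=\mathrm{id}$, $\phi_3=(1\,2)$ (so $1\triangleleft 3=2$), $a=\mathrm{id}$ (so $\beta=$ flip and the $a$-invariance is vacuous), and $H$ the dihedral group of order $8$ generated by commuting involutions $p,q$ and an involution $c$ with $cpc=q$; set $f(1,3)=f(2,3)=c$, $g(1,3)=p$, $g(2,3)=q$, $g(3,1)=p$, $g(3,2)=q$, all other values $1$. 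All displayed conditions f1, f3, g1--g5, m3 hold, yet m2 of Definition \ref{nc2} at $(x,y,z)=(1,3,3)$ forces $g(1,3)=g(2,3)$, which fails. So the ``if'' direction cannot be obtained by the reduction as you state it.

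In fairness, the paper itself offers no more than the sketch you reproduce: the discussion preceding the Proposition keeps m2 in the intermediate list, and it silently disappears in the quandle statement, whereas the paper's own ``$\beta=$ flip, $H$ abelian'' specialization does retain it as $g(x,z)=g(S^2(x,y),z)$. To make your proof (and the equivalence) correct you must keep, alongside $g(x,z)=g(ax,z)=g(x,az)$, the specialized m2 --- equivalently the condition $g(x\triangleleft y,z)=g(x,z)$ --- in the reduced list; with that addition every other step of your proposal goes through, and the verification is routine once the $y=ax$ trick has produced the $a$-invariance.
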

\begin{coro}\label{coroinvo}
Let $(Q,\t)$ be a quandle and assume $a\in Aut(Q,\t)$ 
is a maximum cycle (e.g. $X=\Z_n$ and $a(x)=x+1$). If
$(f,g)$ is  a non-abelian
2-cocycle pair for $\beta(x,y)=(a^{-1}y,ax)$ and $S(x,y)=(y,x\t y)$, then $g\equiv 1$.
\end{coro}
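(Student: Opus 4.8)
The plan is to read off the relevant equations from the Proposition immediately preceding the statement and then exploit the transitivity built into the hypothesis on $a$. Since we are in the quandle situation $S(x,y)=(y,x\t y)$ with $\beta(x,y)=(a^{-1}y,ax)$, that Proposition tells us that any non-abelian 2-cocycle pair $(f,g)$ must in particular satisfy the condition labelled (g0-m1-m2), namely
\[
g(x,z)=g(ax,z)=g(x,az)\qquad\text{for all }x,z\in Q,
\]
together with the normalization (g1): $g(x,x)=1$ for all $x\in Q$. These two facts are all that will be needed; no other cocycle equation enters.

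First I would iterate (g0-m1-m2): applying it repeatedly in each slot yields $g(x,z)=g(a^{k}x,a^{\ell}z)$ for all $k,\ell\in\Z$ and all $x,z\in Q$. Next I would invoke the hypothesis that $a$ is a maximum cycle, which means precisely that the cyclic group $\langle a\rangle$ acts transitively on $Q$ (in the example $Q=\Z_n$, $a(x)=x+1$, the single orbit is all of $\Z_n$). Hence for any two pairs $(x,z)$ and $(x',z')$ there are exponents $k,\ell$ with $a^{k}x=x'$ and $a^{\ell}z=z'$, so $g(x,z)=g(x',z')$. Therefore $g$ is a constant function; write $g\equiv c$ for some $c\in H$. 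Finally, (g1) gives $c=g(x,x)=1$, so $g\equiv 1$, as claimed.

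There is no real obstacle here: the entire content is that $a$-invariance in both variables, combined with transitivity of $\langle a\rangle$, forces $g$ to be constant, and then the diagonal normalization pins that constant to the identity. The only point worth flagging is that one must use (g1) rather than (g2): the condition (g2), $g(x,y)g(y,x)=1$, applied to a constant $g\equiv c$ would only give $c^{2}=1$, which is not enough, whereas $g(x,x)=1$ immediately forces $c=1$.
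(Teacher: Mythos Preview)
Your proof is correct and follows essentially the same route as the paper: use the $a$-invariance of $g$ coming from (g0-m1-m2), iterate using transitivity of $\langle a\rangle$, and then invoke (g1). The paper's version is marginally more economical---it iterates only in the first slot to reach $g(x,z)=g(a^{n}x,z)=g(z,z)=1$ directly---whereas you iterate in both slots to first conclude $g$ is constant; but this is a cosmetic difference, not a different approach.
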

\begin{proof} Given $x,z\in X$, since
$g(x,z)=g(ax,z)$ we have
$g(x,z)=g(a^nx,z)$ for all $n$. If one assumes that the action of $a$ in $X$ is transitive then
we have $a^nx=z$ for some $n$ and so
$g(x,z)=g(z,z)=1$.

\end{proof}

Notice that m3 is nontrivial even if $g\equiv 1$. We mention another special case:
\begin{coro}\label{coroinvo2}
Let $X=\{1,\dots,n\}$,  $a=(1,2,\dots,n)$, then
$(f,g)$ is  a non-abelian
2-cocycle pair for $S$=flip and $\beta(x,y)=(a^{-1}y,ax)$ 
if and only if $g\equiv 1 $,
\[
f(x,y)=f(ax,ay),\
f(x,x)=1\]
and
\[
 f\big(x,y\big)f\big(x\t y,z\big)
 =f\big(x,z\big)f\big(x\t z,y\t z\big).
\]
In particular, for $n=2$, $f$ is fully determined by $f(1,2)$.
\end{coro}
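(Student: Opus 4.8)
Looking at Corollary \ref{coroinvo2}, the plan is to specialize the general quandle proposition to $S = \mathrm{flip}$, which corresponds to the trivial quandle $x \triangleleft y = x$, and then invoke Corollary \ref{coroinvo} to kill $g$.

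First I would observe that $S = \mathrm{flip}$ means $S(x,y) = (y,x)$, which is exactly the biquandle attached to the trivial quandle structure $x \triangleleft y = x$ on $X = \{1,\dots,n\}$. Since $a = (1,2,\dots,n)$ is an $n$-cycle, it is certainly a maximum cycle acting transitively on $X$, so it is an automorphism of the trivial quandle. Hence Corollary \ref{coroinvo} applies directly and gives $g \equiv 1$. Plugging $g \equiv 1$ back into the equation list of the quandle Proposition, conditions g0--g5 become vacuous, m3 collapses to $f(ax,ay) = f(x,y)$, and f1, f3 become exactly the two displayed $f$-equations in the statement (with $x \triangleleft y$ specialized to $x$, so that $x \triangleleft z = x$ and $x \triangleleft y = x$, turning f1 into $f(x,y)f(x,z) = f(x,z)f(x,y)$, i.e. the stated identity after the obvious substitution --- one should double-check that the displayed form $f(x,y)f(x\triangleleft y,z) = f(x,z)f(x\triangleleft z,y\triangleleft z)$ is the intended reading, since with the trivial quandle it reduces to a commutativity relation among the values $f(x,-)$). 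So the forward implication is immediate, and conversely any $f$ (with $g \equiv 1$) satisfying these relations manifestly satisfies all the conditions of the Proposition for $S = \mathrm{flip}$, giving a 2-cocycle pair.

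For the final sentence, when $n = 2$ I would argue as follows: $f(1,1) = f(2,2) = 1$ by f3, and the equivariance $f(ax,ay) = f(x,y)$ with $a = (1\,2)$ forces $f(2,1) = f(a1,a2) = f(1,2)$. Thus the only free value is $f(1,2)$, and one checks that the remaining cocycle identity f1 is automatically satisfied for $n=2$ (there simply aren't enough distinct triples to produce a constraint beyond what $f(x,x)=1$ and the equivariance already impose). Hence $f$ is fully determined by $f(1,2)$.

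The only real subtlety --- rather than an obstacle --- is bookkeeping: making sure that the substitution $x \triangleleft y = x$ is applied consistently everywhere in the Proposition's equation list, and confirming that m3 with $g \equiv 1$ really is just $f(ax,ay) = f(x,y)$ rather than something stronger; this is a direct substitution, so no genuine difficulty arises. The transitivity of $a$ is the one hypothesis that does the essential work, and it is exactly what is needed to feed into Corollary \ref{coroinvo}.
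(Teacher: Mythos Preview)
Your proposal is correct and follows exactly the route the paper intends: the corollary is stated without proof, as an immediate specialization of the preceding Proposition (quandle case with $\beta(x,y)=(a^{-1}y,ax)$) together with Corollary~\ref{coroinvo}, which is precisely what you do. Your observation that, for the trivial quandle $x\triangleleft y=x$, condition f1 reduces to the commutativity $f(x,y)f(x,z)=f(x,z)f(x,y)$ and that m3 with $g\equiv 1$ is exactly $f(ax,ay)=f(x,y)$ is the whole content; the $n=2$ conclusion is handled correctly as well.
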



\subsubsection*{Particular case $\beta$=flip and $H$ an abelian group}
The specialization in this case gives the equations
f1, f2, f3 together with
\begin{itemize}
\item[g1)]  $g(x,x)=1$, 
\item[g2)] $g(x,y)g\big(y,x\big)=1$.
\end{itemize} 
 
\begin{itemize}
 \item[m1)] $g\big(y,z\big)=g\big(S^1(x,y),z\big)$,
 \item[m2)] $g\big(x,z\big)=g\big(S^2(x,y),z\big)$,
 \end{itemize}

\subsubsection*{Connected components}

Let $(Q,\t)$ be a quandle and consider the
equivalence relation generated by $x\t y\sim x \ \forall x,y\in Q$.
Recall that $Q$ is called {\em connected} if there is only one equivalence class.

Generalizing this definition, one can consider, for a biquandle $(X,S)$, the
equivalence relation generated by
 $$\forall x, y \in X, \ \ x\sim S^1(x,y)\hbox{ and }y\sim S^2(x,y),$$
that is, if $S(x,y)=(y',x')$ then $x\sim x'$ and $y\sim y'$. 
The equivalence classes are called {\em connected components}, and 
the biquandle
$(X,S)$ is called {\em connected} if there is only one class.
Clearly if $S$
is given by a quandle then this definition agrees with the previous one.

For a virtual pair $(X,S,\beta)$ there is also a natural equivalence relation, the 
one  generated by
 $$\forall x, y \in X, \ \ x\sim S^1(x,y)\sim \beta^1(x,y)$$ and $$y\sim S^2(x,y)\sim \beta^2(x,y).$$
That is, if $S(x,y)=(y',x')$ and $\beta(x,y)=(y'',x'')$ we are  setting
$x\sim x'\sim x''$, $y\sim y'\sim y''$.
 \begin{defi}\label{defconnected}
For a virtual pair  $(X,S,\beta)$, equivalent classes of elements of $X$ are called
connected components. The
 virtual pair  $(X,S,\beta)$ is called connected if there is only one
 class.
 \end{defi}

\begin{rem}
If one is interested in {\em knots}, then it is clear that
one can restrict the attention to connected virtual pairs, because a coloring
of a knot only uses elements of the same connected component of $X$.
 \end{rem}

\begin{ex}\label{ex24}
If the biquandle $(X,S)$ is already connected then $(X,S,\beta)$ is obviously connected, 
the same for the biquandle $(X,\beta)$. With the help of a computer
 one can check that for cardinal 2,3,5 these are the only cases. For cardinal 4 there are
examples of connencted virtual pairs $(X,S,\beta)$ with nonconnected $(X,S)$ and
nonconnected $(X,\beta)$. More precisely, there are 167 (isomorphism classes of) connected
virtual pairs of size 4,  and 10 of them have disconnected $S$  and $\beta$.
Similar thing happens in cardinal 6, see table in subsection \ref{small}.

\end{ex}

A straightforward consequence of m3 is the following corollary:

\begin{coro}
If $(X,S)$ is  a connected biquandle  and $\beta$=flip then $g\equiv 1$.
\end{coro}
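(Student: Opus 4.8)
The plan is to specialize the compatibility conditions m1 and m3 to the case where $\beta$ is the flip, $\beta(u,v)=(v,u)$, and then to propagate the vanishing of $g$ along a connecting path in $X$. With $\beta^1(u,v)=v$ and $\beta^2(u,v)=u$, and noting that $s_\beta=\mathrm{id}$ (since $\beta(x,y)=(x,y)$ forces $y=x$, so g1 simply says $g(x,x)=1$), condition m1 becomes
\[
g(y,z)=g\big(S^1(x,y),z\big)\qquad\text{for all }x,y,z\in X,
\]
while m3 becomes $g(x,z)\,f(x,y)=f(x,y)\,g\big(S^2(x,y),z\big)$, that is,
\[
g\big(S^2(x,y),z\big)=f(x,y)^{-1}\,g(x,z)\,f(x,y)\qquad\text{for all }x,y,z\in X.
\]

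Next I would fix $z\in X$ and study the function $x\mapsto g(x,z)$ on $X$. The two displayed identities say precisely that the two elementary moves generating the connected-component relation of $(X,S)$ — replacing $y$ by $S^1(x,y)$, and replacing $x$ by $S^2(x,y)$ — alter $g(-,z)$ only by an inner automorphism of $H$, and by the identity automorphism in the first case. Since being conjugate in $H$ is an equivalence relation, the set of pairs $a,b\in X$ for which $g(a,z)$ and $g(b,z)$ are conjugate in $H$ contains all the generators of the connected-component relation, hence contains that relation. Thus, if $(X,S)$ is connected, then for every $a\in X$ the element $g(a,z)$ is conjugate to $g(z,z)=1$; as the conjugacy class of $1$ is $\{1\}$, this gives $g(a,z)=1$. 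Letting $a$ and $z$ range over $X$ yields $g\equiv 1$.

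This is essentially immediate once m1 and m3 are written out in the flip case, so I do not expect a genuine obstacle beyond bookkeeping. The two small points that need attention are the identification $s_\beta=\mathrm{id}$ for $\beta$ the flip, which is what makes g1 give $g(z,z)=1$ rather than something weaker, and the observation that, although m1 gives an honest equality while m3 gives only a conjugation, it suffices to follow $g(-,z)$ up to conjugacy, since one value along the connecting path — namely $g(z,z)$ — is $1$ and $1$ has trivial conjugacy class. (If $H$ is abelian, "conjugate" becomes "equal" and the argument is even more transparent, matching the flip-and-abelian specialization discussed above.)
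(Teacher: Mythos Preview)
Your proof is correct and follows essentially the same idea the paper has in mind when it says the corollary is ``a straightforward consequence of m3'': specialize the mixed conditions with $\beta=\mathrm{flip}$, then propagate along the connectedness relation starting from $g(z,z)=1$. You are slightly more careful than the paper in two respects: you also invoke m1 (needed to handle the generator $y\sim S^1(x,y)$, not just $x\sim S^2(x,y)$), and you track values only up to conjugacy so that the argument goes through for non-abelian $H$, whereas the paper's preceding discussion is in the abelian setting where m1 and m2 already give honest equalities.
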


On the opposite side, if the biquandle is trivial (i.e.  $S(x,y)=(y,x)$) then
m1) and m2) are trivial, the conditions for $g$ are only $g(x,x)=1$ and $g(x,y)=g(y,x)^{-1}$,
as in Example \ref{exflip}.

Next we will construct an invariant for oriented knots or links from a virtual pair 
$(X,S,\beta)$ and a 2-cocycle pair $(f,g)$.

\subsection{Weights}\label{weights}
Let $(X,S,\be)$ be a virtual pair, $H$ a group and  $f,g:X\times X\to H$ a non-abelian 2-cocycle pair.
Let $L=K_1\cup\dots\cup K_r$ be a virtual oriented link
diagram on the plane, where $K_1,\dots , K_r$ 
are connected components, for some positive integer $r$. 
A {\em coloring} of $L$ by $X$ is a rule that assigns an element of $X$ to each semi-arc of $L$, in such a way that
for every  regular crossing (figure on the left corresponds to a positive crossing and figure on the right to a negative one):
\[
\xymatrix{
x\ar@{->}[rd]|\hole&y\ar[ld]\\
z&t
}
\hskip 2cm 
\xymatrix{
z\ar@{->}[rd]&t\ar[ld]|\hole\\
x&y
}
\]
where $(z,t)=S(x,y)$ and  in case of a virtual crossing:

\[
\xymatrix{
x\ar@{->}[rd]|\bigotimes  &y\ar[ld]\\
z&t
}
\] where $(z,t)=\beta(x,y)$.

\begin{rem}
The conditions for $(X,S,\beta)$ to be a virtual pair are precisely the 
compatibility of the set of colorings with the Reidemeister moves
(RI, RII, RIII, vRI, vRII, vRIII  and mixed RIII), so given $(X,S,\beta)$ a virtual pair,
the number of colorings of a link (or a knot) using $(X,S,\beta)$ is an invariant
of that link (or knot).
\end{rem}

Let $\mathcal{C}\in Col_X(L)$ be a coloring of $L$ by $X$ and
 $(b_1,\dots, b_r)$ a set of base points on the components $(K_1,\dots, K_r)$.
Let $\tau^{(i)}=\{\tau_1^{(i)},\dots,\tau_{k_{(i)}}^{(i)}\}$, for $i=1,\dots, r$, be the ordered set of  regular crossings such that the under-arc
belongs to component $i$ or it is virtual corssing involving component $i$.
The order of the set  $\tau^{(i)}$ is given by the orientation of the component starting at the base point.

At a positive crossing $\tau$, let $x_{\tau}, y_{\tau}$ be the color  on the incoming arcs. 
The {\it Boltzmann weight} at a positive crossing $\tau$ is 
$B_{f,g}(\tau, \mathcal{C})=f(x_{\tau}, y_{\tau})$.
At a negative crossing $\tau$, denote $S(x_{\tau}, y_{\tau})$  the colors  on the incoming  arcs.
The {\it Boltzmann weight} at $\tau$ is 
$B_{f,g}(\tau, \mathcal{C})=f(x_{\tau},y_{\tau})^{-1}$
\[
\xymatrix@-1pc{
x_{\tau}\ar@{->}[rdd]|\hole&y_{\tau}\ar[ldd]\\
&\leadsto  f(x_{\tau},y_{\tau}	)\\
S^1\!(x_{\tau}, y_{\tau})&S^2(x_{\tau},y_{\tau})
}
\hskip 0.5cm
\xymatrix@-1pc{
S^1\!(x_{\tau}, y _{\tau})\ar@{->}[rdd]&S^2(x_{\tau},y_{\tau})\ar[ldd]|\hole\\
&\leadsto f(x_{\tau},y_{\tau})^{-1}\\
x_{\tau}&y_{\tau}
}
\]
At a virtual crossing $\tau$, let $x_{\tau}, y_{\tau}$ be the color  on the incoming arcs. 
The {\it Boltzmann weight} at $\tau$ is 
$B_{f,g}(\tau, \mathcal{C})=g(x_{\tau}, y_{\tau})$.
\[
\xymatrix@-1pc{
x_{\tau}\ar[rdd]|\bigotimes &y_{\tau}\ar[ldd]\\
&\leadsto g(x_{\tau}, y_{\tau})&\\
\beta^1(x_{\tau},y_{\tau})&\beta^2(x_{\tau},y_{\tau})\\
}
\]

We will show that a convenient product of these weights is invariant under Reidemeister moves. More precisely, take an oriented component, start 
at a base point, take the product of  Boltzmann weights associated to the crossing whenever
 it is a virtual crossing, or the crossing is classical but one is going through the under arc.

For a group element $h\in H$, denote $[h]$  the conjugacy class to which $h$ belongs.
\begin{defn}\label{defmain}
The set of conjugacy classes  
 \[
  \overrightarrow{\Psi}(L,f,g)= \overrightarrow{\Psi}_{(X,f.g)}(L)
  = \{[\Psi_i(L,\mathcal{C},f,g )]\}_{\underset{\mathcal{C}\in Col_X(L)}{1\leq i\leq r}}
 \]
where  $
        \Psi_i(L,\mathcal{C},f)=\prod^{k(i)}_{j=1}B_{fg}(\tau^{(i)}_j, \mathcal{C})
       $
(the order in this product is following the orientation of the component)
is called the   {\em conjugacy biquandle cocycle invariant} of the link.   
\end{defn}

The following is our main theorem:

\begin{teo}\label{invariante}
The   conjugacy biquandle cocycle     $\Psi$ is well defined and then define a knot/link
 invariant.
\end{teo}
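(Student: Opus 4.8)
The strategy is the standard one for state-sum invariants: check that $\overrightarrow{\Psi}$ is unchanged under each generating move, where ``unchanged'' means the multiset of conjugacy classes of the per-component products $\Psi_i$ is preserved. For each move one compares a local picture before and after, tracking two things simultaneously: (a) colorings correspond bijectively before/after via the coloring rules for $S$ and $\beta$ — this is exactly the content of ``virtual pair'' and was already noted in the Remark after the definition of coloring; and (b) for a fixed matched pair of colorings, the ordered product of Boltzmann weights along each component changes only by an overall conjugation (or not at all). Since $\overrightarrow{\Psi}$ records conjugacy classes, conjugation is harmless; the subtlety is that moving a base point past a crossing conjugates $\Psi_i$, so one must always be free to assume the relevant local tangle does not contain the base point, and check that relations are insensitive to where the reading starts. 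By Lemma \ref{eqmoves} it suffices to treat RI, RII, vRI, vRII, one orientation of mixed RIII, one orientation of RIII, and one orientation of vRIII.

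I would organize the verification move by move. \emph{RI / vRI:} a positive (resp.\ negative) kink on component $i$ inserts a single factor $f(x,s(x))$ or its inverse (resp.\ $g(x,s_\beta(x))$), which is $1$ by f3 (resp.\ g1); one must also check the two curl variants and that the incoming/outgoing color is unchanged, which is what the map $s$ (resp.\ $s_\beta$) encodes. \emph{RII / vRII:} the two new under-crossings contribute $f(a,b)f(a,b)^{-1}=1$ after using f2 to identify the colors appearing at the second crossing (resp.\ $g$ and g2 in the virtual case, $g$ and m1 in the mixed RII); again check both relative orientations. \emph{RIII:} this is the heart of the classical case and is essentially the computation of \cite{FG2}: reading under-arc contributions on all three strands, one gets on each strand a product of two $f$-factors, and the equality of the two sides is exactly f1 (for the strand passing under both others) combined with f2 / the coloring compatibility for the other two strands; here one genuinely uses that $\Psi_i$ is taken up to conjugacy, since the two sides of the move can differ by conjugation by an $f$-value on a strand whose base point lies elsewhere. \emph{vRIII:} purely virtual, handled by g3, g4, g5 (these are the three ways the reading order can distribute the $g$-weights of the three virtual crossings among the three strands). \emph{Mixed RIII:} one classical over-or-under strand and two virtual crossings; this uses m1, m2 to match colorings and m3 to match the weight products — m3 is precisely the identity that lets an $f$-weight slide past a $g$-weight when a classical strand is pushed across a virtual crossing.

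The main obstacle I expect is bookkeeping in the mixed RIII move: one has to be careful about which strand is the ``under'' strand of the classical crossing, about the orientations (using Lemma \ref{eqmoves} to reduce to a single orientation is what makes this tractable), and about the fact that on the classical strand the reading picks up an $f^{\pm1}$ while on the strands meeting the virtual crossings it picks up $g$'s — so the before/after products are genuinely different words in $H$, and one needs m3 (possibly applied twice, once for each virtual crossing in the move) plus a conjugation to reconcile them. A secondary but real point is the global-versus-local issue: the product $\Psi_i$ is cyclic only up to conjugacy, so before invoking any local relation one must argue that the base point can be taken outside the disk where the move happens, and that the crossings involved are consecutive in the order $\tau^{(i)}$ — this is standard but must be stated. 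Once all generating moves are checked, invariance under the equivalence relation generated by them — hence well-definedness of $\overrightarrow{\Psi}$ as a function of the virtual link type — follows formally, and the theorem is proved.
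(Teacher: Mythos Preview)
Your plan matches the paper's proof: defer the classical moves to \cite{FG2}, use Lemma~\ref{eqmoves} to fix one orientation, and then g1, g2, g3--g5, m1--m3 handle vRI, vRII, vRIII, and mixed RIII respectively, with base-point ambiguity absorbed by conjugacy. One correction to your description of mixed RIII: m1 and m2 are \emph{not} coloring-matching conditions (that role is played by the virtual-pair axiom $(1\times\beta)(S\times 1)(1\times\beta)=(\beta\times 1)(1\times S)(\beta\times 1)$ itself) but are, exactly like m3, per-strand weight identities --- in the paper's check m1 is the equation for the strand labeled $y$, m2 for the strand labeled $z$, and m3 for the under-arc strand $x$, with no conjugation or repeated application of m3 needed; also, there is no ``mixed RII'' move in the generating set.
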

\begin{rem} This invariant clearly generalizes the one constructed in \cite{FG2} 
by simply taking $\beta=flip$ and $g\equiv 1$. On the opposite side,
 if  one chooses $f\equiv 1$ and general $g$, this invariant will be trivial on classical 
links or knots, so a nontrivial $g$ may detect virtuality.
\end{rem}

\begin{ex} 
Take the group $H=\langle h\rangle$, $X=\{1,2\}$, $S=\beta=\hbox{flip}$, $f\equiv 1$,
$g(1,1)=g(2,2)=1$, $g(1,2)=g(2,1)^{-1}=h$. Here we show all possible colorings and the corresponding invariants.
\[
\xymatrix{
&1\ar@/^0.5pc/[rd]|(.4)\hole&1\ar@/_0.5pc/[ld]\\
&1\ar@/_0.5pc/[rd]|(.65)\bigotimes&1\ar@/^0.6pc/[ld]&\hskip 1pc\leadsto \{1,1\}\\
&1\ar@/^3pc/[uu]&1\ar@/_3pc/[uu]
}\hskip 2cm
\xymatrix{
&1\ar@/^0.5pc/[rd]|(.4)\hole&2\ar@/_0.5pc/[ld]\\
&2\ar@/_0.5pc/[rd]|(.65)\bigotimes&1\ar@/^0.6pc/[ld]&\hskip 1pc\leadsto \{h^{-1}, h^{-1}\}\\
&1\ar@/^3pc/[uu]&2\ar@/_3pc/[uu]
}\]
\[
\xymatrix{
&2\ar@/^0.5pc/[rd]|(.4)\hole&1\ar@/_0.5pc/[ld]\\
&1\ar@/_0.5pc/[rd]|(.65)\bigotimes&2\ar@/^0.6pc/[ld]&\hskip 1pc\leadsto \{h, h\}\\
&2\ar@/^3pc/[uu]&1\ar@/_3pc/[uu]
}\hskip 2cm
\xymatrix{
&2\ar@/^0.5pc/[rd]|(.4)\hole&2\ar@/_0.5pc/[ld]\\
&2\ar@/_0.5pc/[rd]|(.65)\bigotimes&2\ar@/^0.6pc/[ld]&\hskip 1pc\leadsto \{1,1\}\\
&2\ar@/^3pc/[uu]&2\ar@/_3pc/[uu]
}
\]
In particular, this link is nontrivial and  non classical.
\end{ex}

\begin{proof}(of Theorem \ref{invariante}).
We will check the product of weights is invariant under Reidemeister moves.
In \cite{FG2} calculations due to regular crossings can be found, remains to consider virtual and mixed Reidemeister moves.  
Following Lemma \ref{eqmoves} we will check only one orientation of 
arcs in each Reidemeister move
(the rest will be equivalent). 
\begin{itemize}
              \item Virtual Reidemeister type I move: 
              
 $\beta(x,s_{\beta}(x))=(x,s_{\beta}(x))$ 
\vskip -1.5cm
\[    \xymatrix{
&    &\\
x \ar@{-} `d[r] `r[ru]|(.4)\bigoplus  `u[rl]_{s_{\beta}(x)}`l[rd]`d[dr] &&&x \ar@{-}@/_1pc/[rd]\\
& x&&&x
    }
  \]
   the condition (g1) $g(x, s_{\beta}(x))=1$, assures
that the factor due to this crossing
will not change the product.

\item Virtual Reidemeister type II move:

%
Take, for example, the following diagram:
\[
\xymatrix{
x\ar@/^0.6pc/[rd]|(.45)\bigotimes &y\ar@/_0.6pc/[ld]&& x\ar@/^0.7pc/[dd]&\ar@/_0.7pc/[dd] y\\
\be^1 (x,y)\ar@/_0.6pc/[rd]|(.55)\bigotimes &\be^2(x,y)\ar@/^0.6pc/[ld]&& &\\
x&y&& x&y\\
}\]
%
%
%
%
%
%
%
%
%

Condition (g2) assures the product of weights due to these crossings will not change the product.

 \item Virtual Reidemeister type III move:

Start by naming the incoming arcs $x,y,z$, then the outcoming  arcs are respectively equal as $\beta$ is  a solution of YBeq.
\[
  \xymatrix{
  &y\ar@{->}[rddd]|\bigotimes&\ar@{->}[lddd]z&\\
x\ar@{->}[rrr]|(.4)\bigotimes|(.52)\bigotimes&&&{}^{\be^2(\be^2(x,y),z)}\\  
&
&&&&\\
&{}^{\be^1(\be^1(x,y),\be^1(\be^2(x,y),z))}&
{}^{\be^2(\be^1(x,y),\be^1(\be^2(x,y),z))}&\\} 
\]
\[\xymatrix{
  &y\ar@{->}[rddd]|\bigotimes&\ar@{->}[lddd]z&\\
&&&\\
x\ar@{->}[rrr]|(.33)\bigotimes|(.42)\bigotimes&&&{}^{\be^2(\be^2(x,\be^1(y,z)),\be^2(y,z))}\\  
&{}^{\be^1(x,\be^1(y,z))}&{}^{\be^1(\be^2(x,\be^1(y,z)),\be^2(y,z))}&} 
\]
The product of the weights following the horizontal arc, in the first diagram, is: 
\[
A_1=g(x, y)g(\be^2(x,y),z)
\]
and in the second diagram is: 
\[
 B_1=g(x, \be^1(y,z))g(\be^2(x,\be^1(y,z)),\be^2(y,z))
\]

$A_1=B_1$ is  item (g3) in Definition \ref{nc2}.

The product of the weights following the arc labeled by $y$, in the first diagram, is: 
\[A_2=g(x, y)g(\be^1(x,y),\be^1(\be^2(x,y),z))\] 
and in the second, is: 
\[
 B_2=g(y, z)g(\be^2(x,\be^1(y,z)),\be^2(y,z))
\]

$A_2=B_2$ is item (g4) in Definition \ref{nc2}.  

The product of the weights following the arc labeled by $z$, in the first diagram, is: 
\[A_3=g(\be^2(x,y), z)g(\be^1(x,y),\be^1(\be^2(x,y),z))\] 
and in the second, is: 
\[
 B_3=g(y, z)g(x,\be^1(y,z))
\]

$A_3=B_3$ is  item (g5) in Definition \ref{nc2}.

 \item Mixed virtual Reidemeister type III move:

Start by naming, in both diagrams,  $x,y,z$ the incoming arcs. The outcoming  arcs are respectively equal as $(X,S,\beta)$ is  a virtual pair.
\[
  \xymatrix{
  &y\ar@{->}[rddd]|\bigotimes&\ar@{->}[lddd]z&\\
x\ar@{->}[rrr]|(.41)\hole|(.525)\bigotimes&&&{}^{\be^2(S^2(x,y),z)}\\  
&
&&&&\\
&{}^{\be^1(S^1(x,y),\be^1(S^2(x,y),z))}&
{}^{\be^2(S^1(x,y),\be^1(S^2(x,y),z))}&\\} 
\]
\[\xymatrix{
  &y\ar@{->}[rddd]|\bigotimes&\ar@{->}[lddd]z&\\
&&&\\
x\ar@{->}[rrr]|(.34)\bigotimes|(.43)\hole&&&{}^{S^2(\be^2(x,\be^1(y,z)),\be^2(y,z))}\\  
&{}^{\be^1(x,\be^1(y,z))}&{}^{S^1(\be^2(x,\be^1(y,z)),\be^2(y,z))}&} 
\] 

The product of the weights following the arc labeled by $x$ in the first diagram is:

\[
A_1=f(x,y)g\big(S^2(x,y),z\big)
\]
and in the second diagram is: 
\[
 B_1=g\big(x,\be^1(y,z)\big)f\big(\be^2(x,\be^1(y,z)),\be^2(y,z)\big)
 \]

$A_1=B_1$ is   item (m3) in Definition \ref{nc2}.

The product of the weights following the arc labeled by $y$ in the first diagram is:

\[
A_1=g\big(S^1(x,y),\be^1(S^2(x,y),z)\big)
\]
and in the second diagram is: 
\[
 B_1=g(y,z)
 \]

$A_1=B_1$ is  item (m1) in Definition \ref{nc2}.

The product of the weights following the arc labeled by $z$ in the first diagram is:

\[
A_1=g\big(S^2(x,y),z\big)g\big(S^1(x,y),\be^1(S^2(x,y),z)\big)
\]
and in the second diagram is: 
\[
 B_1=g(y,z)g\big(x,\be^1(y,z)\big)
 \]

$A_1=B_1$ is   item (m2) in Definition \ref{nc2}.

\end{itemize}

 This shows that the product of the weights does not change  under generalized Reidemeister moves.  A change of base points
 causes cyclic permutations of Boltzmann weights, and hence the invariant is defined up to conjugacy.

 \end{proof}

\begin{ex}
In \cite{GPV} the authors mention that there are several ways to generalize the notion
of linking number to the virtual case. For 2-component  links,   they give
 two independent versions of the linking
number: the invariant $lk_{\frac{1}{2}}$
may be computed as a sum of signs of real crossings where
the first component passes over the second one. Similarly, 
$lk_{\frac{2}{1}}$ is defined by
exchanging the components in the definition of $lk_{\frac{1}{2}}$.

In our context, previous definitions can be achieved in the following way: take a 
two component (virtual) link. Take $(X,S,\beta)$ the
 virtual pair with $S=\beta=flip$ and  $f,g$ a 2-cocycle pair with $g=1$. 
Take two different elements  $1,2\in X$. Color 
``the first'' component with color $1$ and ``the second'' component with color $2$. The invariant for the {\em second} component will be
$f^{lk_{\frac{1}{2}}}(2,1)$.
The invariant for the first component will be $f^{lk_{\frac{2}{1}}}(1,2)$. Recall 
(see Example \ref{exflip}) that for
$X$=$\{1,2\}$, $S$=$\beta$=flip , cocycle pairs can be obtained 
considering
$G=Free\{a,b\}\times Free\{h\}$ and $f,g:X\times X\to G$ defined by
\[
f(1,1)=f(2,2)=g(1,1)=g(2,2)=1\]
\[
g(1,2)=h,\ g(2,1)=h^{-1}
\]
\[
f(1,2)=a,\ f(2,1)=b
\]
\end{ex}

\begin{ex}
Take $X=\{0,1\}=\Z/2\Z$,
$S=\hbox{flip}$ and $\beta$ given by
\[
\beta(0,0)=(1,1),\ \beta(1,1)=(0,0),\]\[
\beta(0,1)=(0,1),\ \beta(1,0)=(1,0)
\]
One can check that this rule can be written as $\beta(x,y)=(y-1,x+1)$ so  it is an
involutive biquandle,
and also one can easily check that the coloring rule for $(X,S,\beta)$ 
is the rule of ``changing the color when going trough a virtual crossing
and not changing the color when the crossing is classical'', just 
as in  \cite{Kself}. If one considers the 2-cocycle equations then
(see Corollary \ref{coroinvo2})
 we are lead to  $g\equiv 1$
and a group $H=\langle a \rangle$ with  $f:X\times X\to H$
satisfying
\[
f(1,0)=f(0,1)=a,\ f(0,0)=1,\ f(1,1)=1.
\]
If one uses this  cocycle pair for a classical 2-component link, then  exponent of $a$ is
 the  linking number. So, if one uses this cocycle pair for virtual knots or links,
one gets a different generalization of the linking
number to the virtual case (see \cite{Kself} for the notion of ``self-linking number").
\end{ex}

\begin{rem}\label{remflipinvo}
Given $(X,S=flip)$, the condition for an involutive $\beta$ to be compatible with $S$, 
in the sense that $(X,S,\beta)$ is a virtual pair, 
is non-trivial. Nevertheless, there are plenty of examples; for instance, if $|X|=7$, there are
3456 involutive solutions,  1959 of them are compatible with $S=flip$.
\end{rem}
 
%
%

\subsection{Cohomologous pairs}
From  the following lemma we propose the notion of cohomologous 2-cocycle pair:

\begin{lem}\label{cohom}
Let $f,g:X\times X\to G$ be a 2-cocycle pair and $\lambda:X\to G$ be a map.
If one defines
\[
f_\lambda(x,y):=\lambda(x)f(x,y)\lambda(S^2(x,y))^{-1}
\]
\[
g_\lambda(x,y):=\lambda(x)g(x,y)\lambda(\beta^2(x,y))^{-1}
\]
then 
\begin{itemize}
\item $f_\lambda$ always satisfies f1,
\item $f_\lambda$ satisfies f2 $\iff$  $\lambda(y)=\lambda(S^1(x,y))$ for all $x$,
\item $f_\lambda$ satisfies f3 $\iff$  $\lambda(x)=\lambda(s_S(x))$ for all $x$,
\item $g_\lambda$ satisfies g1 $\iff$  $\lambda(y)=\lambda(s_\beta(x,y))$ for all $x$,
\item $g_\lambda$ always satisfies g3,
\item  $\lambda(y)=\lambda(\be^1(x,y))$ for all $x,y$ $\iff$
 $\lambda(\be^2(x,y))=\lambda(x)$ for all $x,y$.
\item If  $g(x,y)\equiv 1$, then $1_\lambda$ verifies
g2 $\iff$ $\lambda(x)\lambda(\beta^2(x,y))^{-1}
\lambda(\beta^1(x,y))\lambda(y)^{-1}=1$ $\forall x,y$.
\item If $\lambda(y)=\lambda(\beta^1(x,y))$ $\forall x,y$, then
$g_\lambda$ satisfies g2 $\iff$
$[\lambda(x),g(x,y)][g(x,y),\lambda(y)]=1$ for all $x,y$, where the brackets denote the commutator. If also $\lambda(x)$ commutes with $g(x,y)$ for all $y$
then $g_\lambda=g$.
\end{itemize}
\end{lem}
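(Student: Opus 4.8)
The plan is to verify each bullet of Lemma~\ref{cohom} by direct substitution of the formulas for $f_\lambda$ and $g_\lambda$ into the corresponding cocycle equations f1--f3, g1--g3, using only that $(f,g)$ already satisfies those equations and that $S,\beta$ are solutions of the Yang--Baxter equation (together with $\beta^2=1$). The central observation, which organizes the whole computation, is that $f_\lambda(x,y)=\lambda(x)f(x,y)\lambda(S^2(x,y))^{-1}$ has the shape of a ``twist'': when one forms the product $f_\lambda(x,y)f_\lambda(S^2(x,y),z)$ appearing in f1, the inner factor $\lambda(S^2(x,y))^{-1}\lambda(S^2(x,y))$ telescopes, leaving $\lambda(x)f(x,y)f(S^2(x,y),z)\lambda(S^2(S^2(x,y),z))^{-1}$; the analogous telescoping happens on the right-hand side of f1, and the braid relation for $S$ guarantees that the two surviving outer $\lambda$-arguments agree. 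Hence f1 (and, identically, g3 for $g_\lambda$, since g3 has exactly the same combinatorial form with $\beta$ in place of $S$) holds with no condition on $\lambda$ — this gives the first, fifth and second-to-last assertions at once.

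Next I would treat f2 and g1. For f2 one substitutes into $f_\lambda(S^1(x,y),S^1(S^2(x,y),z))=f_\lambda(y,z)$; after cancelling the $f$-part using f2 for $f$ itself, what remains is a statement equating certain values of $\lambda$, and tracking the first/second coordinate bookkeeping (using right-invertibility of $S$ to see the relevant arguments range over all pairs) yields precisely the condition $\lambda(y)=\lambda(S^1(x,y))$ for all $x$. The same style of argument applied to f3, $f_\lambda(x,s_S(x))=1$, together with $f(x,s_S(x))=1$ and $S^2(x,s_S(x))=s_S(x)$ (from the biquandle fixed-point description in Definition~\ref{YBdefi}), reduces to $\lambda(x)=\lambda(s_S(x))$. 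For g1 one uses the biquandle structure of $(X,\beta)$ coming from $\beta^2=1$ and its associated map $s_\beta$, reducing $g_\lambda(x,s_\beta(x))=1$ to the stated identity $\lambda(y)=\lambda(s_\beta(x,y))$. The sixth bullet, $\lambda(y)=\lambda(\beta^1(x,y))\ \forall x,y\iff\lambda(\beta^2(x,y))=\lambda(x)\ \forall x,y$, is a purely formal consequence of $\beta$ being an involution: writing $\beta(x,y)=(\beta^1(x,y),\beta^2(x,y))$ and applying $\beta$ again swaps and recovers the coordinates, so the two conditions are reparametrizations of each other.

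The last two bullets concern g2, which is the delicate one because $g_\lambda$ is not merely a twist of $g$ by the ``same side'' — the condition g2 reads $g_\lambda(x,y)g_\lambda(\beta(x,y))=1$, and $\beta(x,y)=(\beta^1(x,y),\beta^2(x,y))$, so the second factor is $\lambda(\beta^1(x,y))g(\beta(x,y))\lambda(\beta^2(\beta(x,y)))^{-1}=\lambda(\beta^1(x,y))g(\beta(x,y))\lambda(x)^{-1}$ by the involution identity for the second coordinate. For the $g\equiv1$ case the product collapses to $\lambda(x)\lambda(\beta^2(x,y))^{-1}\lambda(\beta^1(x,y))\lambda(y)^{-1}$, giving that bullet directly. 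For general $g$, under the hypothesis $\lambda(y)=\lambda(\beta^1(x,y))$, one rewrites $g_\lambda(x,y)g_\lambda(\beta(x,y))=\lambda(x)g(x,y)\lambda(y)^{-1}\lambda(y)g(\beta(x,y))\lambda(x)^{-1}$; using g2 for $g$ (so $g(x,y)g(\beta(x,y))=1$) this equals $\lambda(x)g(x,y)\lambda(x)^{-1}g(x,y)^{-1}\cdot\big(g(x,y)\lambda(y)^{-1}\lambda(y)g(x,y)^{-1}\big)$ after inserting $g(x,y)^{-1}g(x,y)$, and a short commutator manipulation shows this is $1$ iff $[\lambda(x),g(x,y)][g(x,y),\lambda(y)]=1$; the final clause follows since if in addition $[\lambda(x),g(x,y)]=1$ then $g_\lambda(x,y)=\lambda(x)g(x,y)\lambda(\beta^2(x,y))^{-1}=g(x,y)\lambda(x)\lambda(x)^{-1}=g(x,y)$ (using $\lambda(\beta^2(x,y))=\lambda(x)$, which is equivalent to the standing hypothesis by the sixth bullet). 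The only real obstacle I anticipate is the index/coordinate bookkeeping in f2, g1 and especially g2 — making sure that ``$\beta^2(\beta(x,y))=x$'' and the companion identities for $S$ are applied in the right slots — but these are mechanical once the telescoping pattern is isolated; no deep idea beyond the Yang--Baxter braid relation and $\beta^2=1$ is needed.
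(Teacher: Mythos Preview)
The paper states this lemma without proof, so there is nothing to compare against; your direct-substitution strategy is exactly the routine verification the authors implicitly leave to the reader, and your telescoping observation for f1/g3 together with the Yang--Baxter identity $S^2(S^2(x,y),z)=S^2\big(S^2(x,S^1(y,z)),S^2(y,z)\big)$ is precisely what makes those items automatic.

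One genuine slip to fix: in your treatment of g2 you mis-identify the last $\lambda$-argument. Since $\beta$ is an involution, $\beta^2(\beta(x,y))=y$, not $x$; combined with the standing hypothesis $\lambda(\beta^1(x,y))=\lambda(y)$ (equivalently $\lambda(\beta^2(x,y))=\lambda(x)$), the correct expansion is
\[
g_\lambda(x,y)\,g_\lambda(\beta(x,y))
=\lambda(x)g(x,y)\lambda(x)^{-1}\,\lambda(y)g(\beta(x,y))\lambda(y)^{-1}
=\lambda(x)g(x,y)\lambda(x)^{-1}\,\lambda(y)g(x,y)^{-1}\lambda(y)^{-1},
\]
which equals $[\lambda(x),g(x,y)][g(x,y),\lambda(y)]$ as claimed. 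With your swapped indices the middle $\lambda$'s cancel and the expression collapses to $1$ for \emph{every} $\lambda$, which would contradict the equivalence you are trying to prove; so the conclusion you reached is right, but the intermediate line you wrote is not. The same $\beta^2(\beta(x,y))=y$ identity is also what makes the $g\equiv1$ bullet come out exactly as stated.
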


\begin{defi}
Let $H$ be a group,   $(X,S,\beta)$ be a virtual pair. Two 2-cocycle pairs
$(f,g)$ and $(\tilde{f},\wt g)$  are called cohomologous if
$g=\wt g$ and  there exists $\lambda:X\rightarrow H$ such that 
\[\tilde{f}(x,y)=\lambda(x)f(x,y)[\lambda(S^2(x,y))]^{-1}\]
with $\lambda$ satisfying
\begin{itemize}
 \item $\lambda(x)=\lambda(s_S(x))$,
 \item $\lambda(y)=\lambda(S^1(x,y))$,
 \item $\lambda(y)=\lambda(\beta^1(x,y))$,
\item for all $x$, and $y$,  $\lambda(x)$ commutes with $g(x,y)$.
 \end{itemize}
\end{defi}
 
From  Lemma \ref{cohom} above one can easily prove the following:
\begin{prop}
If $(f,g)$ is a 2-cocycle pair and 
$(\wt f,g)$ is cohomologous to $(f,g)$ then $(\wt f,g)$ is also a 2-cocycle pair.
\end{prop}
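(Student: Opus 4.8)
The plan is to deduce the proposition directly from Lemma \ref{cohom} by checking, one at a time, that the passage from $(f,g)$ to $(\wt f,g)$ preserves each of the defining conditions of a noncommutative $2$-cocycle pair in Definition \ref{nc2}, using the hypotheses imposed on $\lambda$ in the definition of ``cohomologous''. The conditions split into three groups: those involving only $f$ and $S$ (f1, f2, f3), those involving only $g$ and $\beta$ (g1--g5), and the compatibility conditions m1, m2, m3.

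First I would dispose of the group g1--g5: since $\wt g = g$ by definition of cohomologous, these are satisfied by $(\wt f,g)$ because they were satisfied by $(f,g)$. Next, for f1, f2, f3: by Lemma \ref{cohom}, $f_\lambda$ always satisfies f1; it satisfies f2 iff $\lambda(y)=\lambda(S^1(x,y))$ for all $x$, which is one of the standing hypotheses on $\lambda$; and it satisfies f3 iff $\lambda(x)=\lambda(s_S(x))$ for all $x$, which is another standing hypothesis. Here I would note that $\wt f(x,y) = \lambda(x)f(x,y)\lambda(S^2(x,y))^{-1} = f_\lambda(x,y)$ is exactly the map appearing in the lemma, so these three items transfer verbatim.

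The remaining work is m1, m2, m3. Condition m1, $g(y,z)=g(S^1(x,y),\beta^1(S^2(x,y),z))$, involves only $g$, and since $\wt g = g$ it holds automatically. Conditions m2 and m3 are the genuinely mixed ones and are where the hypotheses ``$\lambda(y)=\lambda(\beta^1(x,y))$'' and ``$\lambda(x)$ commutes with $g(x,y)$'' must be used. For m3 I would substitute $\wt f(x,y)=\lambda(x)f(x,y)\lambda(S^2(x,y))^{-1}$ and $\wt f(\beta^2(x,\beta^1(y,z)),\beta^2(y,z)) = \lambda(\beta^2(x,\beta^1(y,z)))\,f(\beta^2(x,\beta^1(y,z)),\beta^2(y,z))\,\lambda(S^2(\beta^2(x,\beta^1(y,z)),\beta^2(y,z)))^{-1}$ into both sides, use the commutation hypothesis to move the $\lambda$-factors attached to $g$ past the $g$-values, and then use the identities among $\lambda$-values forced by the hypotheses (combined with the facts from Lemma \ref{cohom}, e.g. $\lambda(\beta^2(x,y))=\lambda(x)$, which follows from $\lambda(y)=\lambda(\beta^1(x,y))$ via the lemma's sixth bullet, and the consistency of $S,\beta$ as a virtual pair) to identify the outer $\lambda$-factors on the two sides, so that m3 for $(\wt f,g)$ reduces to m3 for $(f,g)$. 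Condition m2 involves only $g$, hence is immediate.

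The main obstacle I anticipate is the bookkeeping in m3: one must verify that the various $\lambda$-arguments appearing ($\lambda(x)$, $\lambda(S^2(x,y))$, $\lambda(\beta^2(x,\beta^1(y,z)))$, $\lambda(S^2(\beta^2(x,\beta^1(y,z)),\beta^2(y,z)))$, $\lambda(\beta^2(x,\beta^1(y,z)))$ again from the $g$-term) collapse, under the hypotheses on $\lambda$ and the virtual-pair relations among $S$ and $\beta$, to a single common value on each side, and that the commutation hypothesis is exactly what is needed to slide that common value out of the $g$-factors. This is a finite diagram-chase of the kind already carried out implicitly in the proof of Lemma \ref{cohom}, so once the cancellations are organized it is routine; the only care needed is to invoke the correct bullet of Lemma \ref{cohom} (in particular the equivalence $\lambda(y)=\lambda(\beta^1(x,y))\iff\lambda(\beta^2(x,y))=\lambda(x)$) to get the $\beta^2$-identities from the assumed $\beta^1$-identity.
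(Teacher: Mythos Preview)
Your proposal is correct and is exactly the verification the paper has in mind: the paper gives no detailed proof, merely stating that the proposition ``can easily be proved from Lemma~\ref{cohom}''. Your breakdown---g1--g5, m1, m2 automatic since $\wt g=g$; f1--f3 from the bullets of Lemma~\ref{cohom} together with the hypotheses on $\lambda$; and m3 by substituting $\wt f=f_\lambda$, using the commutation hypothesis to slide $\lambda(x)$ past $g(x,\be^1(y,z))$ and $\lambda(S^2(x,y))$ past $g(S^2(x,y),z)$, then invoking the virtual-pair identity $S^2(\be^2(x,\be^1(y,z)),\be^2(y,z))=\be^2(S^2(x,y),z)$ together with $\lambda(\be^2(a,b))=\lambda(a)$---is the intended argument.
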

And one can also prove the expected result:

 \begin{prop}\label{cohomologous} 
 If  $(f, g), (\wt f,g)$ are two  cohomologous noncommutative 2-cocycle  pairs
 then  $$[\Psi_i(L,\mathcal{C},f,g)]
 =[\Psi_i(L,\mathcal{C},\wt f, g)].$$ \end{prop}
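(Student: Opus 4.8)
The plan is to compare, crossing by crossing, the two products $\Psi_i(L,\mathcal C,f,g)$ and $\Psi_i(L,\mathcal C,\wt f,g)$ along a fixed component $K_i$ with a fixed base point, and to show that the second product is a conjugate of the first by the element $\lambda$ evaluated at the label of the semi-arc at the base point. Since the $g$-part of both pairs coincides, only the Boltzmann weights at classical crossings where $K_i$ passes through the under-arc differ, and there the factor $f(x_\tau,y_\tau)^{\pm 1}$ gets replaced by $\wt f(x_\tau,y_\tau)^{\pm 1}=\lambda(x_\tau)f(x_\tau,y_\tau)^{\pm1}\lambda(S^2(x_\tau,y_\tau))^{\mp 1}$ (for a negative crossing one uses that $\wt f(x,y)^{-1}=\lambda(S^2(x,y))f(x,y)^{-1}\lambda(x)^{-1}$, which is again telescoping-friendly).

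First I would set up notation for a walk along $K_i$ starting at $b_i$: label the consecutive semi-arcs $a_0=a$, $a_1$, $a_2,\dots$ in the order they are traversed, so that the $j$-th relevant crossing transforms the incoming under-label into the outgoing one. The key observation is that at a classical crossing where we pass under, the under-strand's color changes from $x_\tau$ to $S^2(x_\tau,y_\tau)$ precisely when we pass over $K_i$ as over-strand at a positive crossing, etc.; in all cases the label of the semi-arc just after the crossing is $S^2(x_\tau,y_\tau)$ or $x_\tau$ depending on sign, and this is exactly the argument appearing inside the $\lambda(\cdot)^{-1}$ in $f_\lambda$. At a virtual crossing, or at a classical crossing traversed as an over-strand, the weight is a $g$-weight (unchanged) and the traversed semi-arc changes its color by $\beta$ or $S$; here I invoke the three conditions $\lambda(y)=\lambda(S^1(x,y))$, $\lambda(y)=\lambda(\beta^1(x,y))$ and the consequence $\lambda(\beta^2(x,y))=\lambda(x)$ from Lemma \ref{cohom}, together with the commutation of $\lambda(x)$ with $g(x,y)$, to show that inserting $\lambda(\text{label before})\cdots\lambda(\text{label after})^{-1}$ around a $g$-factor does not change it. This makes the $\lambda$-insertions telescope through the whole product: writing $\Psi_i(L,\mathcal C,\wt f,g)=\lambda(a)\big(\prod_j B_{f,g}(\tau_j^{(i)})\big)\lambda(a)^{-1}$ after cancellation, where the final $\lambda(a)^{-1}$ comes back because the walk returns to the starting semi-arc.

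The main obstacle is bookkeeping: one must check that at \emph{every} type of relevant crossing (positive/negative classical under-pass, classical over-pass, virtual of either handedness) the label of the outgoing traversed arc is exactly the argument of $\lambda$ that appears in the definition of $f_\lambda$ or $g_\lambda$, so that consecutive $\lambda$-factors cancel; this requires care about which of $S^1,S^2,\beta^1,\beta^2$ governs the traversed strand and in which orientation. A secondary subtlety is the over-passing classical crossing: there the recorded weight is trivial (it is not in $\tau^{(i)}$ as an under-crossing), but the color of $K_i$ still changes, so one needs $\lambda(y)=\lambda(S^1(x,y))$ and $\lambda(S^2(x,y))=\lambda(x)$, i.e.\ that $\lambda$ is constant on $S$-orbits of each strand — precisely the hypotheses imposed. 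Once all these cancellations are verified, the conclusion $\Psi_i(L,\mathcal C,\wt f,g)=\lambda(a)\Psi_i(L,\mathcal C,f,g)\lambda(a)^{-1}$ holds, hence $[\Psi_i(L,\mathcal C,f,g)]=[\Psi_i(L,\mathcal C,\wt f,g)]$, which is the assertion. Finally I would remark that the independence from the base point (already established in the proof of Theorem \ref{invariante}) is what lets us phrase everything at the level of conjugacy classes, so the particular choice of starting semi-arc $a$ is immaterial.
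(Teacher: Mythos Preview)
Your approach is the same telescoping argument as the paper's: each Boltzmann weight along $K_i$ is rewritten as $\lambda(\text{incoming label})\cdot(\text{old weight})\cdot\lambda(\text{outgoing label})^{-1}$, adjacent $\lambda$-factors cancel, and one is left with a global conjugation by $\lambda$ at the base point. The paper only spells out the two mixed cases (classical-then-virtual and virtual-then-classical along the traversed strand) and defers the purely classical case to \cite{FG2}, while you list all cases; one small slip: $\lambda(S^2(x,y))=\lambda(x)$ is \emph{not} among the hypotheses and is not needed---at an over-crossing the traversed strand is the $y$-strand, so only $\lambda(y)=\lambda(S^1(x,y))$ is used.
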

\begin{proof} Let us suppose $\wt f(x,y)=\gamma(x) f(x,y)[\gamma(S^2(x,y))]^{-1}$.
Take a link $L$, pick a connected component $K$ and a base point. 
If every crossing in $K$ is virtual it is obvious.
If every crossing in $K$ is classical see \cite{FG2}.
If $K$ has both, virtual and classical crossings: 

 \[
   \xymatrix{
& y \ar@{->}[dd]&& z \ar@{->}[dd]&\\
x\ar@{->}[rrrr]|(0.2)\hole|(.625)\bigoplus^{S^2(x,y)}
&&&&\beta^2(S^2(x,y),z)\\
&S^1(x,y)&&\beta^1(S^2(x,y),z)&
 }\]  
 the product of weights for the horizontal line is:
$$\wt f(x,y)g(S^2(x,y),z)=\lambda(x)f(x,y)[\lambda(S^2(x,y))]^{-1}g(S^2(x,y),z)=$$
$$\lambda(x)f(x,y)g(S^2(x,y),z)\big[\lambda((S^2(x,y))\big]^{-1}=\lambda(x)f(x,y)g(S^2(x,y),z)\big[\lambda(\beta^2(S^2(x,y),z))\big]^{-1}$$
 
  \[
    \xymatrix{
& y \ar@{->}[dd]&& z \ar@{->}[dd]&\\
x\ar@{->}[rrrr]|(0.195)\bigoplus|(0.62)\hole^{\beta^2(x,y)}&&&&S^2(\beta^2(x,y),z)\\
&\beta^1(x,y)&&S^1(\beta^2(x,y),z)&
 }\]
 the product of weights for the horizontal line is:
 \[g(x,y)\wt f(\beta^2(x,y),z)
=g(x,y)\lambda(\beta^2(x,y))f(\beta^2(x,y),z)\big[\lambda\big(S^2(\beta^2(x,y),z)\big)\big]^{-1}
\]
hence
\[
g(x,y)\lambda(x)f(\beta^2(x,y),z)\big[\lambda\big(S^2(\beta^2(x,y),z)\big)\big]^{-1}
\]\[=
\lambda(x)g(x,y)f(\beta^2(x,y),z)\big[\lambda\big(S^2(\beta^2(x,y),z)\big)\big]^{-1}\]
 
  \end{proof}

 \section{Universal noncommutative 2-cocycle pair}

Given a virtual pair $(X,S,\beta)$ we shall define a  group together with a
universal 2-cocycle pair in the following way:

\begin{defi}\label{def:unc}
Let $\Uncfg=\Uncfg(X,S,\beta)$ be the
 group freely generated by symbols
$(x,y)_f$   and $(x,y)_g$ with relations
\begin{itemize}
\item[f1)] 
$\big(x,y\big)_{\!\! f}   \ \big(S^2(x,y),z\big)_{\!\! f}
 =\big(x,S^1\!(y,z)\big)_{\!\! f}\ \big(S^2(x,S^1\!(y,z)),S^2(y,z)\big)_{\!\! f} $ 
\item[f2)]
$\big(S^1\! (x, y), S^1\!(S^2(x,y),z)\big)_{\!\! f}\ =\big(y,z\big)_{\!\! f} $
\item[f3)]
$\big(x, s(x)\big)_{\!\! f} = 1$ 
\item[g1)]  $\big(x, s_{\be} (x)\big)_{\!\! g} =1$ 
\item[g2)] $\big(x,y\big)_{\!\! g}\ \big(\be(x,y)\big)_{\!\! g}\ =1$
\item[g3)] 
$\big (x,y\big)_{\!\! g}\ \big(\be^2(x,y),z\big)_{\!\! g}\ 
 =\big(x,\be^1\!(y,z)\big)_{\!\! g}\ \big(\be^2(x,\be^1\!(y,z)),\be^2(y,z)\big)_{\!\! g} $
\item[g4)] 
$ \big(y,z\big)_{\!\! g}\ \big(\be^2(x,\be^1(y,z)),\be^2(y,z)\big)_{\!\! g}\ 
 =\big(x,y\big)_{\!\! g}\ \big(\be^1(x,y),\be^1(\be^2(x,y),z)\big)_{\!\! g}$
\item[g5)] 
$ \big(y,z\big)_{\!\! g}\ \big(x,\be^1(y,z)\big)_{\!\! g}\ 
 =\big(\be^2\!(x,y),z\big)_{\!\! g}\ \big(\be^1(x,y),\be^1(\be^2(x,y),z)\big)_{\!\! g}$
 \item[m1)] $\big(y,z\big)_{\!\! g}\ =\big(S^1(x,y),\be^1(S^2(x,y),z)\big)_{\!\! g} $
 \item[m2)] $\big(y,z\big)_{\!\! g}\ \big(x,\be^1(y,z)\big)_{\!\! g}\ =\big(S^2(x,y),z\big)_{\!\! g}\ \big(S^1(x,y),\be^1(S^2(x,y),z\big)_{\!\! g} $
 \item[m3)] $\big(x,\be^1(y,z)\big)_{\!\! g} \ \big(\be^2(x,\be^1(y,z)),\be^2(y,z)\big)_{\!\! f}=
\big(x,y\big)_{\!\! f}\ \big(S^2(x,y),z\big)_{\!\! g}$.
 \end{itemize}
Denote $f_{xy}$ anf $g_{xy}$ the class  in $\Uncfg$
of $(x,y)_{\! f}$ and $(x,y)_{\! g}$ respectively. 
We also define $\pi_f,\pi_g:X\times X\to \Uncfg$ by
\[
\pi_f,\pi_g\colon X\times X\to \Uncfg\]\[
\pi_f(x,y):=f_{xy},\]\[
\pi_g(x,y):=g_{xy}
\]
\end{defi}

The following is immediate from the definitions:
\begin{teo}\label{teouncfg}
Let $(X,S,\beta)$ be virtual pair:
\begin{itemize}
\item   The pair of maps
$\pi_f,\pi_g\colon X\times X\to \Uncfg$
is a noncommutative 2-cocycle pair.
\item 
Let $H$ be  a group and
 $f,g:X\times X\to H$ a  noncommutative 2-cocycle pair, then there exists a unique group homomorphism
$\rho :\Uncfg\to H$ such that
$f=\rho \circ \pi_f$
and $g=\rho\circ \pi_g$
 \[
 \xymatrix{
 X\times X\ar[d]_{\pi_f}\ar[r]^f&H\\
 \Uncfg\ar@{-->}[ru]_{ \rho}
 }\hskip 1cm
 \xymatrix{
 X\times X\ar[d]_{\pi_g}\ar[r]^g&H\\
 \Uncfg\ar@{-->}[ru]_{ \rho}
 }\]
       \end{itemize}
\end{teo}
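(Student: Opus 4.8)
The plan is to prove Theorem \ref{teouncfg} directly from the universal property of groups defined by generators and relations. The first bullet is essentially tautological: the defining relations f1)--f3), g1)--g5), m1)--m3) of $\Uncfg$ in Definition \ref{def:unc} are exactly the equations f1)--f3), g1)--g5), m1)--m3) of Definition \ref{nc2}, written with $(x,y)_f\mapsto f_{xy}$ and $(x,y)_g\mapsto g_{xy}$. So the maps $\pi_f(x,y)=f_{xy}$ and $\pi_g(x,y)=g_{xy}$ satisfy, by construction, every identity required of a noncommutative 2-cocycle pair; there is nothing to check beyond matching the two lists term by term.

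For the second bullet, let $F$ be the free group on the symbols $\{(x,y)_f, (x,y)_g : x,y\in X\}$, so that $\Uncfg = F/N$ where $N$ is the normal closure of the relator words listed in Definition \ref{def:unc}. Given a 2-cocycle pair $f,g\colon X\times X\to H$, the universal property of the free group yields a unique group homomorphism $\widetilde\rho\colon F\to H$ with $\widetilde\rho((x,y)_f)=f(x,y)$ and $\widetilde\rho((x,y)_g)=g(x,y)$. I would then observe that each relator word of $\Uncfg$ maps under $\widetilde\rho$ to the identity of $H$: this is precisely the content of the cocycle-pair equations in Definition \ref{nc2}, which $f,g$ satisfy by hypothesis. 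For instance the relator coming from f1) maps to $f(x,y)f(S^2(x,y),z)\big(f(x,S^1(y,z))f(S^2(x,S^1(y,z)),S^2(y,z))\big)^{-1}=1$, and similarly for f2), f3), g1)--g5), m1)--m3). Hence $N\subseteq\ker\widetilde\rho$, so $\widetilde\rho$ factors through a homomorphism $\rho\colon\Uncfg\to H$ with $\rho\circ\pi_f = f$ and $\rho\circ\pi_g=g$. Uniqueness is immediate: the classes $f_{xy}$ and $g_{xy}$ generate $\Uncfg$, and $\rho$ is forced on generators by the factorization conditions, so any two such $\rho$ agree everywhere.

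Honestly, there is no real obstacle here — as the paper itself says, the statement "is immediate from the definitions." The only thing that requires genuine (if routine) attention is the bookkeeping in the factorization step: one must be sure that the relators of $\Uncfg$ have been transcribed so as to correspond exactly, as relations in $H$, to the defining equations of a 2-cocycle pair, including getting the conventions right for which side of each equation is inverted when forming a relator word. A careful reader could simply verify the two lists are identical after the substitution $(x,y)_f\leftrightarrow f(x,y)$, $(x,y)_g\leftrightarrow g(x,y)$. I would therefore keep the proof to a short paragraph invoking the free-group universal property plus the observation that the relators lie in the kernel, and leave the verification that the two lists of equations coincide to inspection of Definitions \ref{nc2} and \ref{def:unc}.
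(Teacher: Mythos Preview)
Your proposal is correct and matches the paper's treatment: the paper gives no proof at all, merely stating that the theorem ``is immediate from the definitions,'' and your argument spells out exactly the standard universal-property verification that justifies this claim. There is nothing to add or compare.
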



\begin{rem} $\Uncfg$ is functorial. That is, 
 if $\phi:(X,S,\beta)\to (Y,S',\beta')$ is a morphism of 
virtual pairs, namely $\phi$
satisfy
\[
(\phi\times\phi)S(x_1,x_2)=
S'(\phi x_1,\phi x_2),
\hskip 1cm
(\phi\times\phi)\beta(x_1,x_2)=
\beta'(\phi x_1,\phi x_2)
\]
then, $\phi$ induces a (unique) group homomorphism
$\Uncfg(X)\to \Uncfg(Y)$
satisfying
\[
f_{x_1x_2}\mapsto
f_{\phi x_1 \phi x_2}
\hskip 1cm 
g_{x_1x_2}\mapsto
g_{\phi x_1 \phi x_2}
\]
\end{rem}
\begin{proof}
One needs to prove that the assignment $f_{x_1x_2}\mapsto
f_{\phi x_1 \phi x_2}$ and $g_{x_1x_2}\mapsto
g_{\phi x_1 \phi x_2}$
are compatible with the relations defining $\Uncfg(X)$ and $\Uncfg(Y)$ respectively,
and this is clear since $(\phi\times\phi)\circ S=S\circ (\phi\times\phi)$
and
 $(\phi\times\phi)\circ\beta=\beta\circ (\phi\times\phi)$.
\end{proof}

\begin{rem}
 In order to produce an invariant of a knot or link, given a solution $(X,S,\beta)$,
 we need to produce a coloring
 of the knot/link by $X$, and then find a noncommutative 2-cocycle, but since $\Uncfg$ 
is functorial,
 given $X$ we always have the universal 2-cocycle pair  $f,g:X\times X\to \Uncfg(X)$,
 and hence, the information given by the invariant was already
included in the combinatoric of the colorings.

 Also, if $\phi:X\to X$ is a bijection commuting with $S$ and $\beta$,
 then, given a coloring and its invariant calculated
 with the universal cocycle, we may apply $\phi$
 to each color and get another coloring, and this will produce the same invariant pushed by $\phi$ in $\Uncfg$.
 
\end{rem}
 
\begin{ex}\label{unc1} Computations of Example \ref{exflip}
show that for $X=\{1,2\}$ and $S=\beta=flip$, $\Uncfg(X)\cong Free(a,b)\times Free(h)$
where $(1,1)_f=(2,2)_f=(1,1)_g=(2,2)_g=1$, $(1,2)_f=a$, $(2,1)_f=b$, $(1,2)_g=h$,
$(2,1)_g=h^{-1}$.
\end{ex}

\begin{ex}\label{unc2} If $X=\{1,2\}$, $S(x,y)=(y+1,x+1)$ (mod 2) and  $\beta=flip$, then
\[
 \Uncfg(X)\cong Free(c)\]
where $c=(1,1)_f=(2,2)_f$,  $1=(1,2)_f=(2,1)_f$, and $(x,y)_g=1$ for all $x,y\in X$.
This virtual pair does not give the same information as the previous example (since
for instance $g\equiv 1$), but it gives a different way to generalize the linking number to virtual links.
\end{ex}

\subsection{Some examples of virtual pairs of small cardinality\label{small}}

Using GAP, the list of biquandles and involutive solutions, one can easily 
compute the list of (isomorphism classes of) virtual pairs of small cardinality.
We show the total amount of them in the following table. 
The amount grows very fast, for cardinal 6 the computer takes too long
to compute all virtual pairs, so we put on the table only parcial cases for $n=6$.
The notation
$(S,i_a)$ is for virtual pairs with biquandle $S$ and involutive $\beta$ of the
form $\beta(x,y)=(a^{-1}y,ax)$, with $a\in \Aut(X,S)$.
 Notice that for each $S$ there are
as many isomorphism classes of pairs $(S,i_a)$ as conjugacy classes
of $\Aut(X,S)$. 

\

\begin{tabular}{|c|c|c|c|c|c|}
\hline
n&2&3&4&5&6\\
\hline
all virtual pairs&4&90&3517&46658&\\
\hline
virtual pairs $(S,i_a)$&4&38&325&41278&111151\\
\hline
connected virtual pairs&3&26&167&138&836\\
\hline
conn. virtual pairs with&&&&&\\
non conn. $S$ and non conn $\beta$&0&0&10&0&84\\
\hline
\end{tabular}

\

The complete list in each case can be found in: \newline
http://mate.dm.uba.ar/\~{}mfarinat/papers/GAP/virtual

\section{Some virtual   knots/links and their n.c. invariants}

We begin with an example of colorings:
 \begin{ex}
Let $S$ be the dihedral quandle, that is $X=\{1,2,3\}$ and $S(x,y)=(y,x\t y)$ where
$x\t y=2y-x$ (mod 3). $\Aut(X,S)$ can be identified with the dihedral group
 $D_3=S_3$. There are three conjugacy classes in $D_3$, a set of representatives is
 $\{\id, (2,3),(1,2,3)\}$. For $a\in\Aut(X,S)$ denote $i_a$ the involutive biquandle given by 
$i_a(x,y)=(a^{-1}(y),a(x))$. In the following table we write the number of colorings 
of the Kishino's knots using the corresponding virtual pair, so we see that they are all 
different.
\[
\begin{array}{ccc}
 \includegraphics[scale=.15]{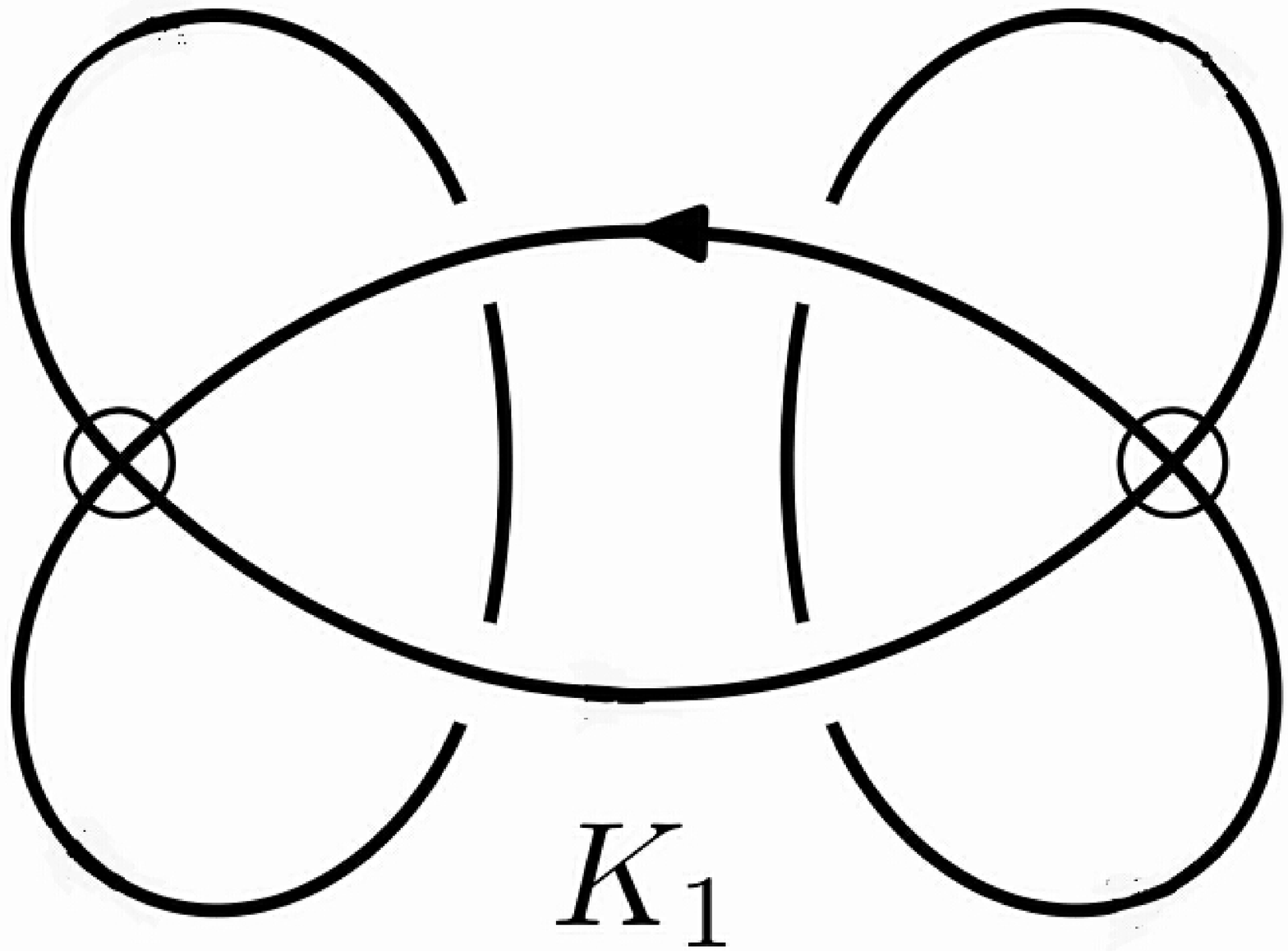}\hspace{2.5mm}&\hspace{1.5mm}\includegraphics[scale=.15]{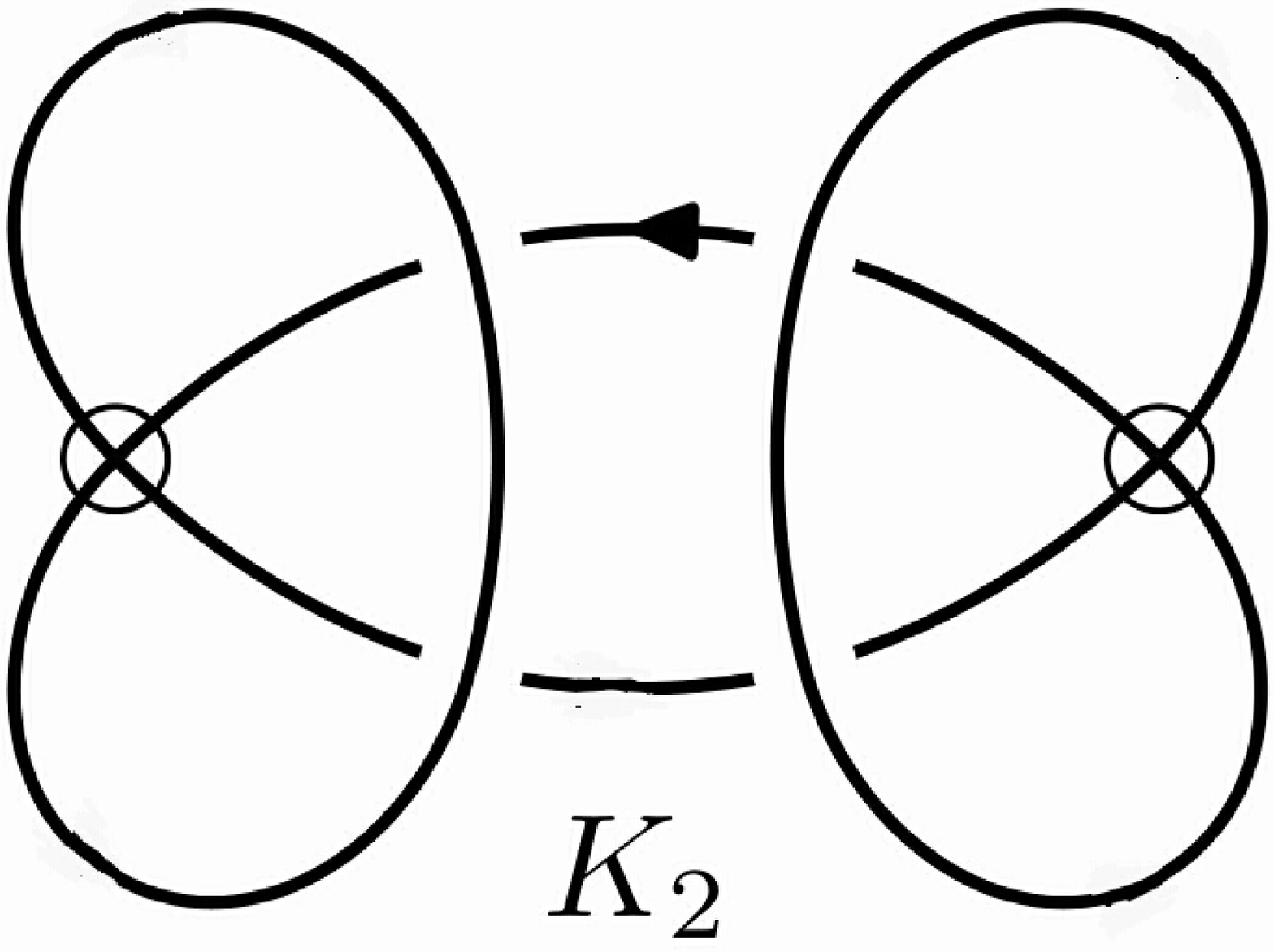}\hspace{1.5mm}&\includegraphics[scale=.15]{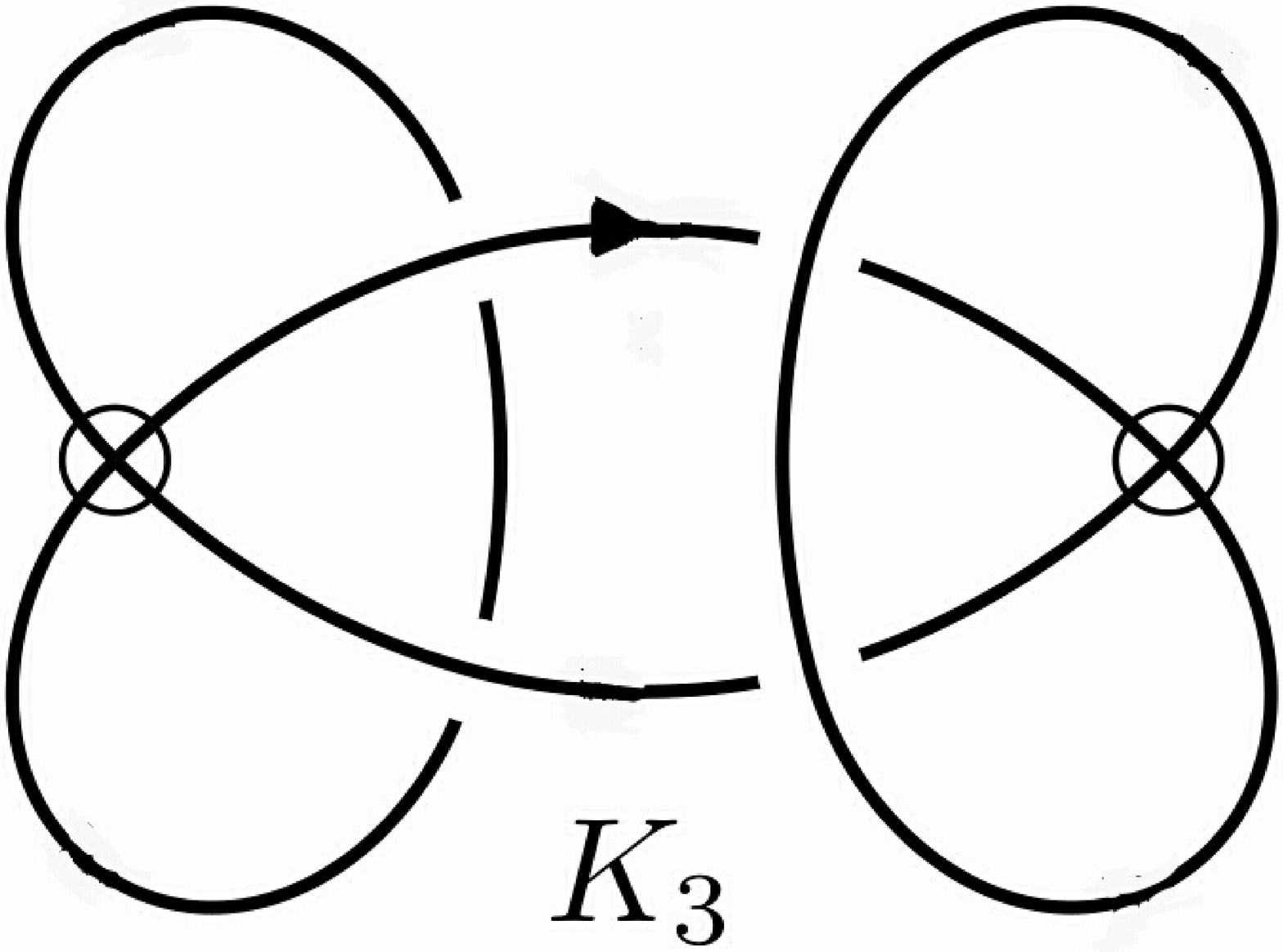}
\end{array}
\]

\[
\begin{array}{c||c|c|c}
\#colorings&(S,i_{id})&(S,i_{(2,3)})&(S,i_{(1,2,3)})\\
\hline\hline
K1&9&3&9\\
\hline
K2&3&9&3\\
\hline
K3&3&3&3
\end{array}
\]
Moreover, for $X=\{1,2,3,4\}$ with $S$ given by
\[\begin{array}{cccc}
S(1,1)=(1,1)& S(1,2)=(2,4)& S(1,3)=(4,2)& S(1,4)=(3,3) \\
S(2,1)=(3,4)& S(2,2)=(4,1)& S(2,3)=(2,3)& S(2,4)=(1,2) \\
S(3,1)=(4,3)& S(3,2)=(3,2)& S(3,3)=(1,4)& S(3,4)=(2,1) \\
S(4,1)=(2,2)& S(4,2)=(1,3)& S(4,3)=(3,1)& S(4,4)=(4,4)
\end{array}
\]
and $\beta=flip$, then the number of colorings of K3 is 16, so K3 is also nontrivial.
\end{ex}

\subsection{Links}

It is worth to notice that \cite{BF} computes virtual pairs of small cardinality. In that work,
some classes of virtual pairs are considered, the so called essential pairs, and the welded pairs.
Recall that there are ``forbidden'' Reidemeister moves:

\[
\begin{array}{cc}
 \includegraphics[scale=.35]{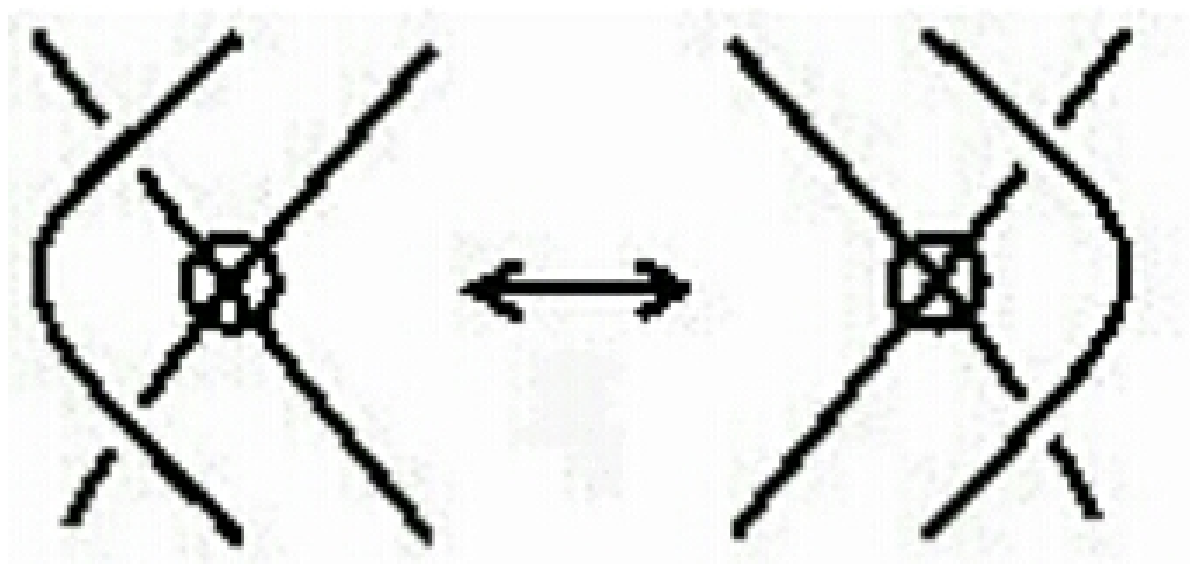}\hspace{2.5mm}&\hspace{1.5mm}\includegraphics[scale=.35]{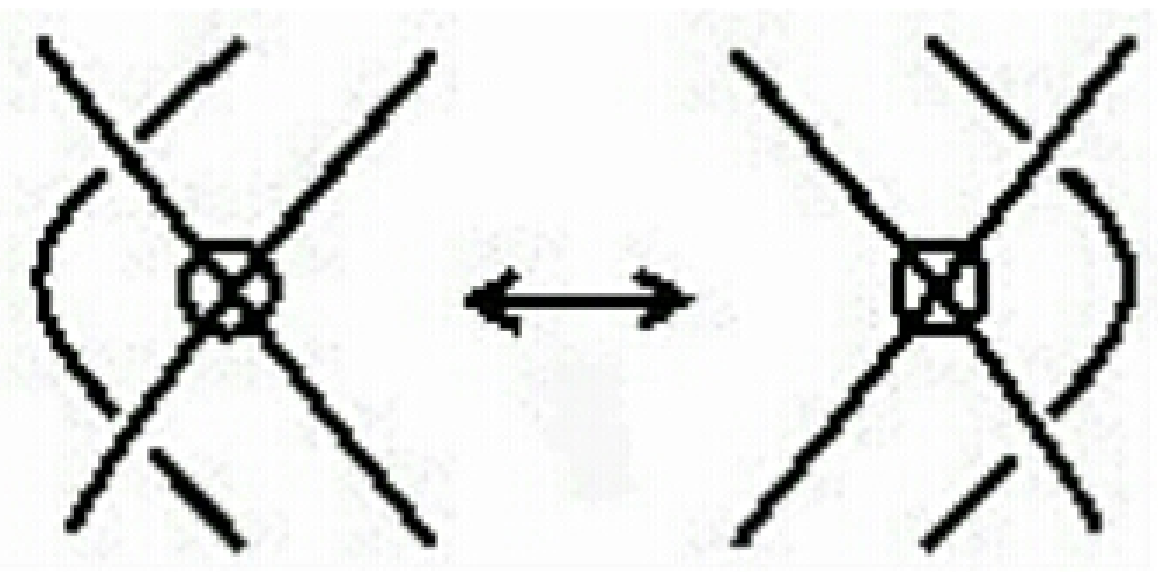}
\end{array}
\]

These moves are not allowed in virtual knots, and if one uses both forbidden moves, then
one can ``unknot" every knot/link. So, essential virtual pairs are pairs that {\em do not }
satisfy
those forbidden moves, in welded pairs a forbidden move is allowed (see \cite{BF}
for details). In this work we consider {\em all} virtual pairs, that's why we have more
virtual pairs that in \cite{BF}. in particular, for n=2,
the trivial example $(flip,flip)$ is not considered in \cite{BF}, 
and one can easily see that the number of colorings doesn't give any interesting information, 
just if the link is connected or not, but
the 2-cocycle invariant is highly nontrivial, as we show next.

From the list of 51 virtual links provided by A. Bartholomew 
(these are 2-component links with 4, 5 or 6 crossings), coloring with $(X=\{1,2\}, S=\beta=flip)$ (see Example \ref{unc1}) and computing the invariant 
(coloring will not  distinguish these links),
leaves 18 classes. To refine this, color with
 $(X=\{1,2\}, S=antiflip, \beta=flip)$ (i.e. Example \ref{unc2}) 
and compute the invariant. Using both invariants leaves 38 classes.
Furthermore, color with $X=\{1,2,3,4\}$ and all possible virtual pairs (without computing the invariant) and get 47 classes.

We exhibit three examples of links from this list, and their invariants:

\begin{ex}
\[
\begin{array}{ccc}
 \includegraphics[scale=.15]{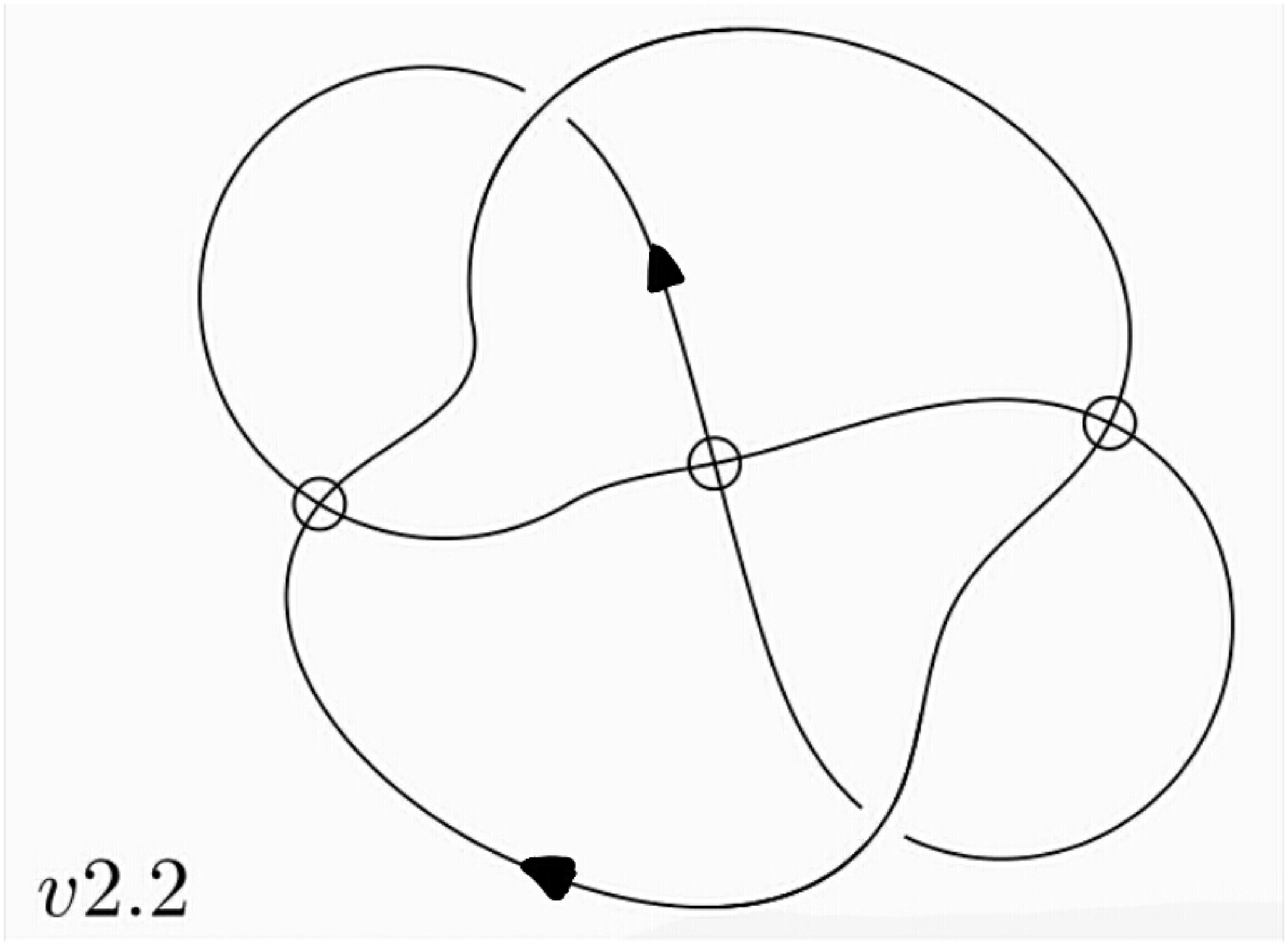}\hspace{2.5mm}&\hspace{1.5mm}\includegraphics[scale=.145]{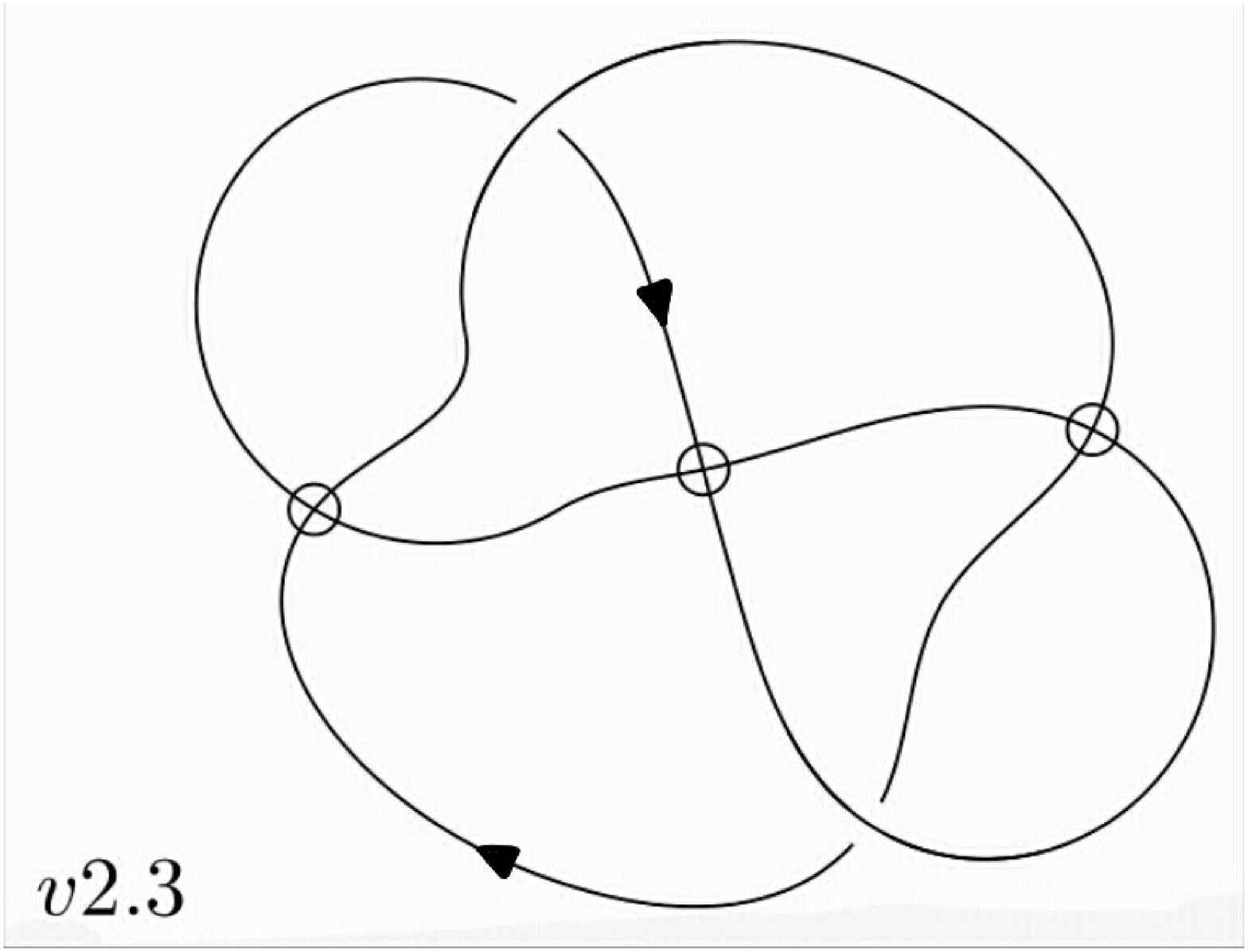}\hspace{1.5mm}&\includegraphics[scale=.168]{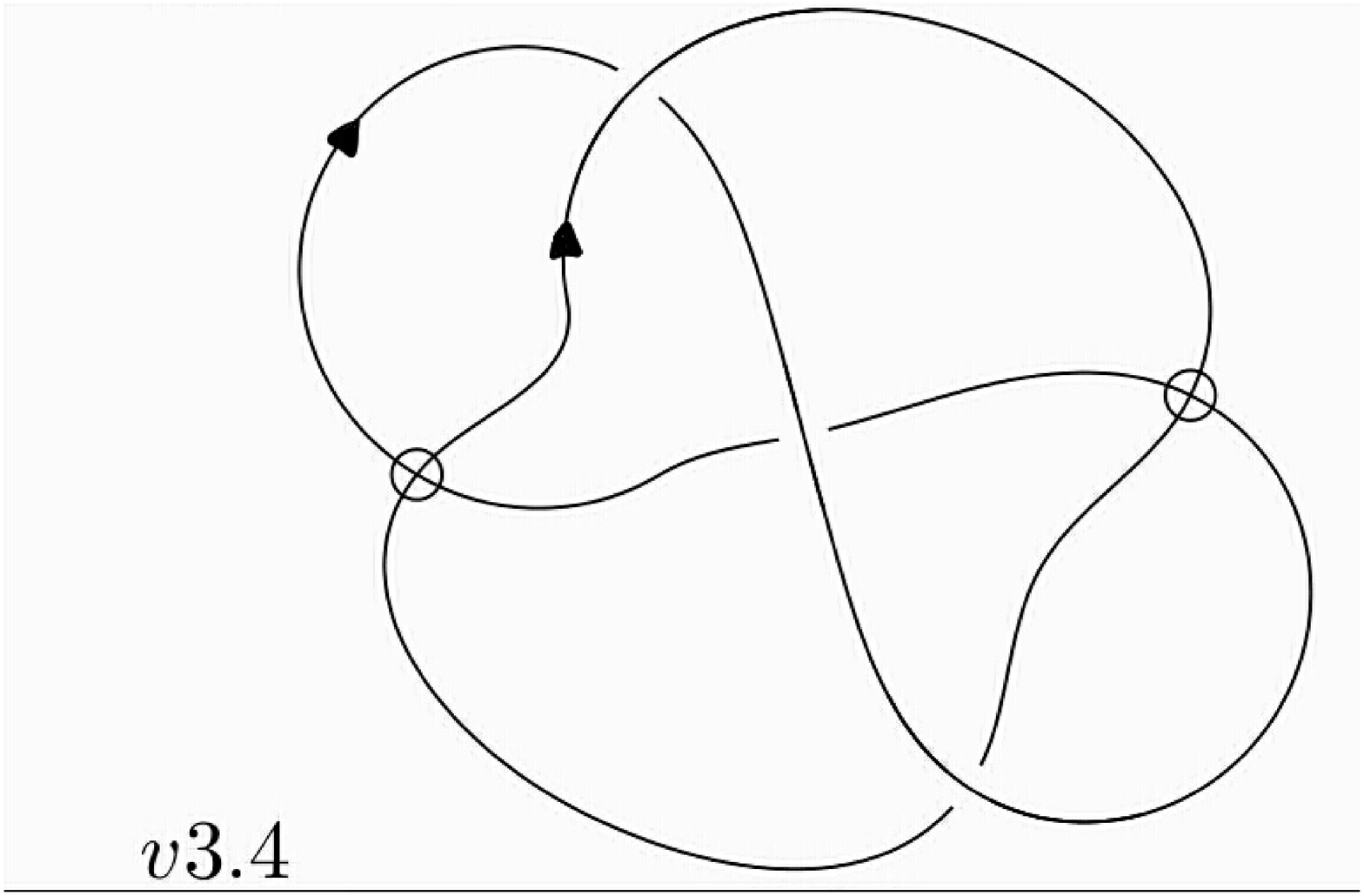}
\end{array}
\]
\[\begin{array}{|c||c|c|c|c|}
\hline
(X,S,\beta)&\Uncfg&v2.2&v2.3&v3.4\\
\hline\hline
(\{1,2\}\hbox{,flip,flip})&\langle a,b\rangle \times \langle h\rangle &4\{1,1\}&2\{a^{-1},b^{-1}\},&2\{a^{-1},b^{-1}\},\\
& &&2\{1,1\}&2\{1,1\}\\
\hline
(\{1,2\}\hbox{,a-flip,flip})&\langle c\rangle&4\{1,1\}&2\{c^{-2},1 \},&2\{c^{-1},c^{-1}\}, \\
& &&2\{1,1\}&2\{1,1\}\\
\hline
\end{array}
\]

Using $X=\{1,2\}$ and $S=\beta=flip$ (see Example \ref{unc1}) and computing the invariant gives 4 colorings to each $v2.2,v2.3$ and $v3.4$.  For every coloring the invariant of
$v2.3$ and $v3.4$
 gives $\{a^{-1}, b^{-1}\}$ twice and $\{1,1\}$ twice, but the same computation for $v2.2$  gives always $\{1,1\}$.
 
 To be able to distinguish   
 $v2.3$ from  $v3.4$, consider same set $X$ but $S=antiflip$ and $\beta=flip$ (see Example \ref{unc2}), again there are 4 possible colorings for each link.
 The invariant gives:
     $\{c^{-1},c^{-1}\}$ twice and $\{1,1\}$ twice for $v2.3$ and 
     $\{c^{-2},1\}$ twice and $\{c^{-1},c^{-1}\}$ twice for $v3.4$,
and always $\{1,1\}$ for $v2.2$.
                     \end{ex}

\begin{rem}
The exponent of $a$ (or $b$) is the first generalization of 
linking number to the virtual case (in the sense that if the link is classical then it gives the
 linking number). Using the second virtual pair, the 
exponent of $c$ is a different generalization of the linking number.
\end{rem}

\subsection*{A non-commutative example}

Consider $S$ given by the quandle $\{1,2,3,4\}$
 with operation
\[
-\t 1=\t  \ 2=(3,4),\]
\[ 
-\t 3=\t\ 4=(1,2)\]
that is, $S(x,y)=(y,x\t y)$, and $\beta$ the involutive solution
      
$
\beta(x,y)= ( l_x(y),r_y(x))$ where
\[
l_1=l_2=(1,2),\]
\[l_3=l_4=(1,2)(3,4),\]
 and $r_i=l_i$ ($i=1,2,3,4$).
(This is the 
pair number 248 in the list vp4 in \cite{FG}.)

Using the relations of $\Uncfg=\Uncfg(X,S,\beta)$ one can easily see that
\[
\begin{array}{rcccccccccccc}
1&=&(1,2)_f&=&(2,1)_f&=&(3,4)_f&=&(4,3)_f&=&(1,1)_g&=&(1,2)_g
\\
&=&(2,1)_g&=&(2,2)_g&=&(3,3)_g&=&(3,4)_g&=&(4,3)_g&=&(4,4)_g
\\
a&:=&(1,1)_f&=&(2,2)_f,
&&b&:=&(1,3)_f&=&(2,4)_f\\
c&:=&(1,4)_f&=&(2,3)_f,
&&d&:=&(3,1)_f&=&(4,2)_f\\
e&:=&(3,2)_f&=&(4,1)_f,
&&f&:=&(3,3)_f&=&(4,4)_f\\
&&h &:=&(1,3)_g&=&(1,4)_g&=&(2,3)_g&=&(2,4)_g\\
&&h^{-1}&=&(3,1)_g&=&(3,2)_g&=&(4,1)_g&=&(4,2)_g
\end{array}
\]
So, we have 7 generators, and if one (or a computer)  writes the list of all relations
in terms of $a,b,c,d,e,f,h$, one gets
\[b=ac,\
b=ca,\ c=ab, \ c=ba,\]
\[
ab=ba,\
ac=ca,\
ah=ha,\
bc=cb,\]
\[bh=hc,\
ch=hb,\
\]
\[
d=ef,\
d=fe,\ e=df,\
e=fd,\
dd=ee,\]
\[df=fd,\
dh=he,\
ef=fe,\]
\[eh=hd,\
fh=hf.\]
If one solves $b$, and $d$ in terms of $a,c,e,f,h$, equations above translate into

\[b=ac,\
   d=ef,\]
\[
ac=ca,\ c=aac, \ c=aca,\
aac=aca,\
\]
\[ah=ha,\
acc=cac,\
ach=hc,\]
\[
ch=hac,\
ch^{-1}=h^{-1}ac,
\]
\[
ef=fe,\ e=eff,\
e=fef,\
efef=ee,\]
\[eff=fef,\
efh=he,\
\]
\[eh=hef,\
fh=hf.\
\]
One can easily see that $a^ 2=1$, $f^2=1$, $a=[h,c]$ (=$hch^{-1}c^{-1}$), $f=[e^{-1},h]$.

\begin{rem}
Let $G$ be a group, $a,c,h\in G$ and assume
\[a^2=1,\ a=[h,c],\  [h,a]=[c,a]=1,\]
then $[h,c]=[c,h]=[c^{-1},h^{-1}]=[c^{-1},h]=[c,h^{-1}]$.
\end{rem}
Using this remark, it is an easy exercise to check the following characterization:

\begin{coro}Denote $a:=[h,c]$ and $f:=[h,e]$, then
\[
\Uncfg(S,\beta)\cong 
\frac{Free(h,c,e)}
{\left\langle\begin{array}{c}
a^2=[a,c]=[a,h]=1,
\\f^2=[f,e]=[f,h]=1\end{array}\right\rangle}.\]
\end{coro}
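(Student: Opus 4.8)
The plan is to read off the stated presentation from the one already obtained above by a short chain of Tietze transformations. We start from the presentation of $\Uncfg$ on the generators $a,c,e,f,h$ (the generators $b=ac$ and $d=ef$ having been eliminated) with the explicit relation list displayed just before the Remark, together with the observations made there that $a^2=f^2=1$, $a=[h,c]$ and $f=[e^{-1},h]$. The first step is to replace that long list by the equivalent six‑relation set
\[
a^2=[a,c]=[a,h]=1,\quad ach=hc,\qquad f^2=[f,e]=[f,h]=1,\quad efh=he .
\]
That these six are consequences of the long list is immediate: $a^2=1$ and $f^2=1$ come from $c=aac$ and $e=eff$; the four commutator relations are literally $ac=ca$, $ah=ha$, $ef=fe$, $fh=hf$; and $ach=hc$, $efh=he$ occur verbatim.

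For the converse I would verify the long relations one by one. The relations $c=aac$, $c=aca$, $aac=aca$, $acc=cac$ use only $a^2=1$ and $[a,c]=1$; the relation $ch=hac$ holds because $hac=ahc$ by $[a,h]=1$ while $ch=ahc$ is equivalent to $ach=hc$ after multiplying on the left by $a$ and using $a^2=1$; and $ch^{-1}=h^{-1}ac$ is just the restatement $ac=hch^{-1}$ of $ach=hc$. Here the Remark is the convenient device: once $a^2=1$, $a=[h,c]$ and $[a,c]=[a,h]=1$ are in force, $a$ equals each of $[h,c]$, $[c,h]$, $[c^{-1},h]$, $[c,h^{-1}]$ and $[c^{-1},h^{-1}]$, so every rewriting above becomes a mechanical identity. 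The same elementary manipulations, now with the analogue of the Remark for $f,e,h$ in place of $a,c,h$ (so that $f$ equals every orientation of the commutator of $h$ and $e$, in particular $f=[h,e]$), dispose of the relations $e=eff$, $e=fef$, $efef=ee$, $eff=fef$, $eh=hef$ in the second block. The hard part of the whole argument is exactly this bookkeeping; routing it through the Remark so that one always picks whichever orientation of a commutator is handiest keeps it to a short computation.

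Finally I would eliminate $a$ and $f$: the relation $ach=hc$ says $a=hch^{-1}c^{-1}=[h,c]$ and $efh=he$ says $f=e^{-1}heh^{-1}=[e^{-1},h]$, so two Tietze moves remove these generators and leave $Free(h,c,e)$ modulo the relations $[h,c]^2=[[h,c],c]=[[h,c],h]=1$ and $[e^{-1},h]^2=[[e^{-1},h],e]=[[e^{-1},h],h]=1$. Since $e\,[e^{-1},h]\,e^{-1}=[h,e]$, in any quotient in which $[e^{-1},h]$ commutes with $e$ one has $[e^{-1},h]=[h,e]$ (and conversely $[h,e]$ then commutes with $e$), so the two triples of relators $\{[e^{-1},h]^2,[[e^{-1},h],e],[[e^{-1},h],h]\}$ and $\{[h,e]^2,[[h,e],e],[[h,e],h]\}$ have the same normal closure in $Free(h,c,e)$. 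Rewriting with $[h,e]$ and putting $a:=[h,c]$, $f:=[h,e]$ yields exactly the presentation in the statement; the only obstacle throughout is the verification in the second paragraph.
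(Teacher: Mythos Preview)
Your proposal is correct and is precisely the exercise the paper leaves to the reader: the paper offers no argument beyond ``Using this remark, it is an easy exercise to check the following characterization,'' and you have carried out that exercise by Tietze transformations, invoking the Remark in exactly the intended way to control the various orientations of the commutators. Your handling of the discrepancy $f=[e^{-1},h]$ versus $f=[h,e]$ via the conjugation identity $e[e^{-1},h]e^{-1}=[h,e]$ is the one nontrivial point, and it is done correctly.
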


\begin{rem}
The element $a$ is nontrivial in $\Uncfg$.
\end{rem}
\begin{proof}
If one adds the relation $e=1$ then (recall $a:=[h,c]$) 
\[
G:=\Uncfg/\langle e=1\rangle\cong 
\frac{Free(h,c)}
{\left\langle
\ a^2=[a,c]=[a,h]=1,
\right\rangle}\]
$G$ can be described as a central extension of 
$\Z^2\cong Free(h,c)/\langle[h,c]\rangle$ over $\Z/2\Z\cong\langle a : a^2=1\rangle$,
 more precisely,
consider the set of monomials
\[
M:=\{h^ic^ja^\epsilon : i,j\in\Z, \epsilon=0,1\}
\]
then $M$ is a group with multiplication given by
\[
(h^ic^ja^\epsilon )(h^kc^la^\sigma)=
h^{i+k}c^{j+l}a^{jk+\epsilon+\sigma Mod \ 2}
\]
and clearly $S\cong M$, so $a\neq 1$ in $G$.
\end{proof}
 \begin{rem}
 A famous quotient of $\Uncfg$ is the quaternion group
$H=\{\pm1,\pm i,\pm j,\pm k\}$ where
 $e\mapsto 1$, $h\mapsto i$, $c\mapsto j$. One can see that relations go to 1,
 so we have a well-defined group homomorphism, and $a\mapsto -1$.
  \end{rem}
  
  \begin{rem}
  If one uses the abelianization of $\Uncfg$, then one gets essentially a (Laurent) polynomial in the variables $h$, $c,$ $e$, and clearly
  the element $a$ is trivial in  $(\Uncfg)_{ab}$, since $a=[h,c]$. But there are examples where the full non-commutative invariant gives $a$ as
  answer (see next example),
   so this non-commutative invariant refines the
   2-cocycle one with values only in commutative groups.
   \end{rem}

 \begin{ex}
 If one uses this virtual pair and the universal 2-cocycle, then the invariant for 
 the virtual link v2.3
 is $(a,a)$ twice, 
  $(f,f)$ twice, and 4 times $(1,1)$.
  \end{ex}

\subsection{State sum}

If the target group $(A,\cdot)$ is abelian, then one can perform the state-sum for a pair
of maps  $f,g:X\times X\to A$, defining Bolzman Weights in the same way. For a given
 coloring, consider the product over all crossings of the corresponding weights, and
 then sum over all colorings.
If one asks for Reidemeister invariance in this construction, the set of equations are:

\begin{itemize}
\item[ss-f1)] 
$f(x, s(x))= 1$, 
\item[ss-f2)]
$ f\big(x,y\big)f\big(S^2(x,y),z\big)f\big(S^1\! (x, y), S^1\!(S^2(x,y),z)\big)
=\\
 =f\big(x,S^1\!(y,z)\big)f\big(S^2(x,S^1\!(y,z)),S^2(y,z)\big)f\big(y,z\big)
$,
\item[ss-g1)]  $g(x, s_{\be}(x))=1$, 
\item[ss-g2)] $g(x,y)g\big(\be(x,y)\big)=1$,

\item[ss-g3)] 
$ 
g\big(x,y\big)
g\big(\be^2(x,y),z\big)
g\big(\be^1(x,y),\be^1(\be^2(x,y),z)\big)
= \\
=g\big(x,\be^1\!(y,z)\big)
g\big(\be^2(x,\be^1(y,z)),\be^2(y,z)\big)
g\big(y,z\big)
$
 \item[ss-m)] $ g(y,z)
 g\big(x,\be^1(y,z)\big)
f\big(\be^2(x,\be^1(y,z)),\be^2(y,z)\big)
 =\\
 \\=
 g\big(S^1(x,y),\be^1(S^2(x,y),z)\big)
g\big(S^2(x,y),z\big)
f(x,y)$.
 \end{itemize}
Conditions  ss-f1 and ss-f2 are a consequence of f1, f3, f3. Also ss-m 
follows from m1, m2, m3. We have ss-g1 and ss-g2 are the same as g1 and g2. But
g3, g4, and g5 imply (ss-g3)${}^2$, that is, assuming g3 g4 and g5 one can conclude
\[
g\big(x,y\big)^2
g\big(\be^2(x,y),z\big)^2
g\big(\be^1(x,y),\be^1(\be^2(x,y),z)\big)^2
=\]
\[=g\big(x,\be^1\!(y,z)\big)^2
g\big(\be^2(x,\be^1(y,z)),\be^2(y,z)\big)^2
g\big(y,z\big)^2
\] 
If the abelian group $A$ has no elements of order 2,
then a non-commutative 2-cocycle pair is also a commutative 2-cocycle. One can think of
 the group $(\Uncfg)_{ab}$ as a nontrivial way of producing cocycles for 
virtual state-sum invariants, at least when
$(\Uncfg)_{ab}$ has no elements of order 2.

\section{Final questions}
We end with some open questions: 
\begin{enumerate}
\item When $S=flip$, the compatibility condition for (an involutive) $\beta$ is non-trivial,
but nevertheless there are many solutions (see Remark \ref{remflipinvo}).
Is there a characterization in ``involutive" terms? e.g. in terms of the dot operation 
(cyclic set structure), or brace,  associated to involutive solutions as considered by Rump
 \cite{R}?
\item Is it possible to classify connected virtual pairs in group theoretical terms?
\item Given a finite virtual pair $(X,S,\beta)$, it is easy to produce an algorithm 
computing generators and relations of  $\Uncfg(X)$ , but one needs to do case by case.
Is there a way to compute $\Uncfg(X)$ in general at least for a family of virtual
pairs? e.g. for $S$=biAlexander switch and $\beta$ affine?
\item When $\beta$=flip and $g\equiv 1$, then the conditions on $f$ are the same as
 the 2-cocycle condition considered in \cite{FG2}, which is a generalization of the
 quandle case considered in \cite{CEGS}. Also, in \cite{CEGS}, the authors prove that
the noncommutative 2-cocycle invariant (in the quandle case, for classical knots/links)
 is a quantum invariant. It seems that the fact that
this noncommutative invariant is a quantum one
may be generalized
to the biquandle case (and still classical knots or links), but it is not clear 
at all how to proceed when there 
are virtual crossings. It would be interesting to see what should be
the ``quantum algebraic'' categorical data corresponding to virtual pairs and 
2-cocycle pairs.
\end{enumerate}

{\bf Acknowledgements:} We would like to thank Andrew Bartholomew for 
kindly providing us the list of  virtual links with 4, 5 and 6 crossings and 
for attentive e-mail conversations.

\end{document}